\theoremstyle{definition}
\newtheorem{theorem}{Theorem}[section]
\newtheorem{lemma}[theorem]{Lemma}
\newtheorem{corollary}[theorem]{Corollary}
\newtheorem{proposition}[theorem]{Proposition}
\newtheorem{example}[theorem]{Example}
\newtheorem{remark}[theorem]{Remark}
\newcommand{\Z}{\mathbb{Z}}
\newcommand{\N}{\mathbb{N}}
\DeclareMathOperator{\TL}{TL}
\DeclareMathOperator{\DTL}{\mathbb{D}TL}
\DeclareMathOperator{\depth}{depth}
\renewcommand{\P}{\mathcal{P}}
\providecommand{\abs}[1]{\left\lvert#1\right\rvert}
\newcommand{\w}{\mathsf{w}}
\renewcommand{\H}{\mathcal{H}}
\DeclareMathOperator{\FC}{FC}
\definecolor{naugreen}{cmyk}{.43,0,.34,.38}
\definecolor{naublue}{cmyk}{1,.72,0,.32}
\definecolor{mediterranean}{cmyk}{.67,0,.08,.3}
\definecolor{rose}{cmyk}{0,1.00,.20,0}
\definecolor{darkorchid}{cmyk}{.6,.9,0,.05}
\definecolor{butterfly}{cmyk}{.95,.59,0,.10}
\definecolor{springgreen}{cmyk}{1.00,0,.70,.02}
\definecolor{darkred}{cmyk}{0,1,1,.5}
\definecolor{nectarine}{cmyk}{0,0.70,1.00,0}
\definecolor{icyblue}{cmyk}{.84,.25,0,.06}
\definecolor{orange}{RGB}{255,102,0}
\definecolor{ggreen}{RGB}{0,153,0}
\definecolor{darkblue}{RGB}{0,0,255}
\definecolor{purple}{RGB}{153,51,255}
\definecolor{turq}{RGB}{72,209,204}
\newcommand\xxaxis{0}
\newcommand\yyaxis{90}
\newcommand\heapblock[4]{\fill[fill=#4, fill opacity=0.25, draw=#4, line width=1.1pt, rounded corners,shift={(\xxaxis:#1)},shift={(\yyaxis:#2)}] (-1,-0.5) rectangle (1,0.5);\node at (#1,#2) {\footnotesize $#3$};}
\newcommand\lp[1]{
\fill[fill=white, draw=black, xshift=-1, shift={(\yyaxis:#1)}] (0.75,#1) ellipse (16pt and 8pt);
}
\begin{document}

\title{Factorization of Temperley--Lieb diagrams}

\author[Ernst]{Dana C.~Ernst}
\address{Department of Mathematics and Statistics, Northern Arizona University, Flagstaff, AZ 86011}
\email{dana.ernst@nau.edu}
\urladdr{http://dcernst.github.io}

\author[Hastings]{Michael G.~Hastings}
\address{Department of Mathematics and Statistics, Northern Arizona University, Flagstaff, AZ 86011}
\email{mgh64@nau.edu}

\author[Salmon]{Sarah K.~Salmon}
\address{Department of Mathematics, University of Colorado Boulder, Boulder, CO 80309}
\email{sarah.salmon@colorado.edu}

\date{\today}

\subjclass[2010]{20C08, 20F55, 57M15}
\keywords{diagram algebra, Temperley--Lieb algebra, Coxeter group, heap}

\maketitle


\begin{abstract}
The Temperley--Lieb algebra is a finite dimensional associative algebra that arose in the context of statistical mechanics and occurs naturally as a quotient of the Hecke algebra arising from a Coxeter group of type $A$. It is often realized in terms of a certain diagram algebra, where every diagram can be written as a product of ``simple diagrams."  These factorizations correspond precisely to factorizations of the so-called fully commutative elements of the Coxeter group that index a particular basis. Given a reduced factorization of a fully commutative element, it is straightforward to construct the corresponding diagram. On the other hand, it is generally difficult to reconstruct the factorization given an arbitrary diagram. We present an efficient algorithm for obtaining a reduced factorization for a given diagram.
\end{abstract}


\section{Introduction}\label{sec:Intro}

The Temperley--Lieb algebra, invented by Temperley and Lieb in 1971~\cite{Temperley1971}, is a finite dimensional associative algebra that arose in the context of statistical mechanics. Penrose~\cite{Penrose1971} and Kauffman~\cite{Kauffman1990} showed that this algebra can be faithfully represented by a diagram algebra that has a basis given by certain diagrams. In 1987, Jones~\cite{Jones1999} showed that the Temperley--Lieb algebra occurs naturally as a quotient of the Hecke algebra arising from a Coxeter group of type $A$ (whose underlying group is the symmetric group). This realization of the Temperley--Lieb algebra as a Hecke algebra quotient was later generalized to the case of an arbitrary Coxeter group by Graham~\cite{Graham1995}. These generalized Temperley--Lieb algebras have a basis indexed by the fully commutative elements (in the sense of Stembridge~\cite{Stembridge1996}) of the underlying Coxeter group. In cases when diagrammatic representations are known to exist, it turns out that every diagram can be written as a product of ``simple diagrams."  Each factorization of a diagram corresponds precisely to a factorization of the fully commutative element that indexes the diagram. Given a diagrammatic representation and a reduced factorization of a fully commutative element, it is easy to construct the corresponding diagram. However, given an arbitrary basis diagram, it is generally difficult to reconstruct the factorization of the corresponding group element. In the (type $A$) Temperley--Lieb algebra, we have devised an algorithm for obtaining a reduced factorization for a given diagram.

This paper is organized as follows. In Section~\ref{sec:Preliminaries}, we recall the basic terminology of Coxeter groups, fully commutative elements, heaps, and the Temperley--Lieb algebra, as well as establish our notation and review several necessary results. Section~\ref{sec:TL-DiagramAlgebra} describes the construction of the diagram algebra that is a faithful representation of the Temperley--Lieb algebra. This section includes a description of both the so-called simple diagrams that generate the algebra, as well as the basis that is indexed by the fully commutative elements of the Coxeter group of type $A$.  We present our algorithm for factoring a given Temperley--Lieb diagram in terms of the heap associated to the corresponding fully commutative element in Section~\ref{sec:MainResults}.  We conclude with Section~\ref{sec:ClosingRemarks}, which details potential further research.



\section{Preliminaries}\label{sec:Preliminaries}

\subsection{Coxeter groups}\label{subsec:CoxeterGroups}
A \emph{Coxeter system} is a pair $(W,S)$ consisting of a finite set $S$ of generating involutions and a group $W$, called a \emph{Coxeter group}, with presentation
\[
W = \langle S \mid (st)^{m(s,t)} = e ~\text{for}~ m(s,t) < \infty\rangle,
\]
where $e$ is the identity, $m(s,t) = 1$ if and only if $s = t$, and $m(s,t) = m(t,s)$.  It follows that the elements of $S$ are distinct as group elements and that $m(s,t)$ is the order of $st$~\cite{Humphreys1990}. Coxeter groups are generalizations of reflection groups, where each generator $s \in S$ can be thought of as a reflection. Recall that the composition of two reflections is a rotation by twice the angle between the corresponding hyperplanes. So if $s,t \in S$, we can think of $st$ as a rotation with order $m(s,t)$.

Since elements of $S$ have order two, the relation $(st)^{m(s,t)} = e$ can be written as
\begin{equation}\label{eqn:Braid}
\underbrace{sts \cdots}_{m(s,t)} = \underbrace{tst \cdots}_{m(s,t)} \end{equation}
with $m(s,t) \geq 2$ factors. If $m(s,t) = 2$, then $st = ts$ is called a \emph{commutation relation} since $s$ and $t$ commute. Otherwise, if $m(s,t) \geq 3$, then the corresponding relation is called a \emph{braid relation}. The replacement
\[
\underbrace{sts \cdots}_{m(s,t)} \mapsto \underbrace{tst \cdots}_{m(s,t)}
\]
will be referred to as a \emph{commutation} if $m(s,t) = 2$ and a \emph{braid move} if $m(s,t) \geq 3$.

We can represent the Coxeter system $(W,S)$ with a unique \emph{Coxeter graph} $\Gamma$ having
\begin{enumerate}
\item vertex set $S = \{s_1, \ldots, s_n\}$ and
\item edges $\{s_i,s_j\}$ for each $m(s_i,s_j) \geq 3$.
\end{enumerate}
Each edge $\{s_i,s_j\}$ is labeled with its corresponding bond strength $m(s_i,s_j)$. Since bond strength 3 is the most common, we typically omit the labels of 3 on those edges.

There is a one-to-one correspondence between Coxeter systems and Coxeter graphs. Given a Coxeter graph $\Gamma$, we can construct the corresponding Coxeter system $(W,S)$. In this case, we say that $(W,S)$, or just $W$, is of type $\Gamma$. If $(W,S)$ is of type $\Gamma$, for emphasis, we may write $(W,S)$ as $(W(\Gamma),S(\Gamma))$. Note that generators $s_i$ and $s_j$ are connected by an edge in the Coxeter graph $\Gamma$ if and only if $s_i$ and $s_j$ do not commute~\cite{Humphreys1990}.

The Coxeter system of type $A_n$ is given by the Coxeter graph in Figure~\ref{fig:A}. In this case, $W(A_n)$ is generated by $S(A_n) = \{s_1, s_2, \ldots, s_n\}$ and has defining relations
\begin{enumerate}
\item $s_is_i = e$ for all $i$;
\item $s_is_j = s_js_i$ when $\abs{i-j} > 1$;
\item $s_is_js_i = s_js_is_j$ when $\abs{i-j} = 1$.
\end{enumerate}
The Coxeter group $W(A_n)$ is isomorphic to the symmetric group $S_{n+1}$ under the mapping that sends $s_i$ to the adjacent transposition $(i,~i+1)$. This paper focuses on an associative algebra whose underlying structure is a Coxeter system of type $A_n$.

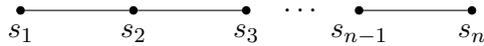
\begin{figure}[h!]
\begin{tabular}{m{7cm} m{7cm}}
\centering
\begin{tikzpicture}[scale=1,every circle node/.style={draw, circle, fill,inner sep=1pt}, node distance=1.5cm]
\node [circle, label=below:$s_1$] (s1) at (0,0){};
\node [circle, label=below:$s_2$, right of= s1] (s2){};
\node [circle, label=below:$s_3$, right of= s2] (s3){};
\node (dots)[right of=s3, node distance=.75cm]{$\cdots$};
\node [circle, label=below:$s_{n-1}$, right of=s3] (sn-1){};
\node [circle, label=below:$s_{n}$, right of=sn-1] (sn){};
\path[-] (s1) edge (s2);
\path[-] (s2) edge (s3);
\path[-] (sn-1) edge (sn);
\end{tikzpicture}
\end{tabular}
\caption{Coxeter graph of type $A_{n}$.} \label{fig:A}
\end{figure}

Let $S^*$ denote the free monoid over $S$. If a word $\w=s_{x_1}s_{x_2}\cdots s_{x_m}\in S^*$ is equal to $w$ when considered as an element of $W$, we say that $\w$ is an \emph{expression} for $w$. (Expressions will be written in {\sf sans serif} font for clarity.) Furthermore, if $m$ is minimal among all possible expressions for $w$, we say that $\w$ is a \emph{reduced expression} for $w$, and we call $m$ the \emph{length} of $w$, denoted $\ell(w)$.  Each element $w \in W$ can have several different reduced expressions that represent it. The following theorem, called Matsumoto's Theorem~\cite{Geck2000}, indicates how all of the reduced expressions for a given group element are related.

\begin{proposition}\label{thm:Matsumoto}
In a Coxeter group $W$, any two reduced expressions for the same group element differ by a finite sequence of commutations and braid moves. \qed
\end{proposition}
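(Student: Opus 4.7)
I would proceed by strong induction on $\ell(w)$, with the cases $\ell(w) \le 1$ trivial since there is only one reduced expression. For the inductive step, let $\mathsf{u} = s \cdot \mathsf{u}'$ and $\mathsf{v} = t \cdot \mathsf{v}'$ be two reduced expressions for $w \in W$, with $s, t \in S$. If $s = t$, then $\mathsf{u}'$ and $\mathsf{v}'$ are reduced expressions for $sw$, whose length is strictly smaller; by the inductive hypothesis they are connected by a finite sequence of commutations and braid moves, and prepending $s$ to every word in that sequence produces the desired chain from $\mathsf{u}$ to $\mathsf{v}$.

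Suppose instead $s \neq t$. Since $\mathsf{u}$ is reduced and begins with $s$, we have $\ell(sw) < \ell(w)$; the analogous inequality holds for $t$. The key geometric input, derivable from the Exchange Condition (which in turn comes from the reflection representation of $(W,S)$ on a real vector space and a sign analysis of root images), is that whenever both $s$ and $t$ are left descents of $w$ with $s \neq t$, the order $m := m(s,t)$ must be finite and $w$ admits a reduced expression beginning with the full alternating word of length $m$. Writing such an expression as
\[
\mathsf{p} \;=\; \underbrace{sts\cdots}_{m\text{ letters}}\; \mathsf{q},
\]
a single application of the braid relation~\eqref{eqn:Braid} produces
\[
\mathsf{p}' \;=\; \underbrace{tst\cdots}_{m\text{ letters}}\; \mathsf{q},
\]
which is again a reduced expression for $w$, and $\mathsf{p} \leftrightarrow \mathsf{p}'$ is a single move (a commutation when $m=2$, a braid move when $m \geq 3$).

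Now $\mathsf{u}$ and $\mathsf{p}$ are reduced expressions for $w$ both beginning with $s$, while $\mathsf{v}$ and $\mathsf{p}'$ are reduced expressions for $w$ both beginning with $t$. Stripping off the leading letter in each pair leaves reduced expressions for $sw$ and $tw$ respectively, both of strictly smaller length, so the inductive hypothesis connects them by commutations and braid moves. Prepending the stripped letters and splicing in the single move $\mathsf{p} \leftrightarrow \mathsf{p}'$ yields a chain
\[
\mathsf{u} \;\sim\; \mathsf{p} \;\sim\; \mathsf{p}' \;\sim\; \mathsf{v},
\]
as required.

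\textbf{The main obstacle.} The inductive skeleton is routine bookkeeping; the substantive content is the assertion that two distinct left descents $s, t$ of $w$ force $m(s,t)$ to be finite and guarantee a reduced expression for $w$ starting with the entire braid word of length $m(s,t)$. Proving this from first principles is the technical heart of the matter and is typically established via the action of $W$ on its root system, tracking which positive roots are sent to negative ones by successive reflections. Once that fact is available, everything else in the proof is essentially formal.
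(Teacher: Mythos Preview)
Your inductive outline is the standard proof of Matsumoto's theorem and is correct as a sketch; the identification of the Exchange Condition (or equivalently the root-system argument) as the real content is accurate, and the bookkeeping with the intermediate expressions $\mathsf{p}$ and $\mathsf{p}'$ is exactly how one bridges the two cases. There is nothing to compare against, however: the paper does not prove this proposition at all. It is stated with a trailing \qed\ and attributed to~\cite{Geck2000} as a known result, serving only as background for the later material on fully commutative elements and heaps. So your proposal supplies strictly more than the paper does here.
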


Let $\w$ be a reduced expression for $w \in W$. We define a \emph{subexpression} of $\w$ to be any subsequence of $\w$. We will refer to a consecutive subexpression of $\w$ as a \emph{subword}.

\begin{example}\label{ex:Subword}
Let $\w = s_1 s_2 s_4 s_5 s_2 s_6 s_5$ be an expression for $w \in W(A_6)$. Then we have
\[
s_1 \textcolor{blue}{s_2 s_4} s_5 s_2 s_6 s_5
= s_1 s_4 \textcolor{blue}{s_2 s_5} s_2 s_6 s_5
= s_1 s_4 s_5 \textcolor{ggreen}{s_2 s_2} s_6 s_5
= s_1 s_4 s_5 s_6 s_5,
\]
where the \textcolor{blue}{blue} subword indicates the location where a commutation is applied to obtain the next expression and the \textcolor{ggreen}{green} subword indicates the location where two adjacent occurrences of the same generator are canceled to obtain the last expression. This shows that $\w$ is not reduced. It turns out that $s_1 s_4 s_5 s_6 s_5$ is a reduced expression for $w$ and hence $\ell(w) = 5$.
\end{example}

\begin{example}
Let $\w = s_1s_2s_3s_4s_2$ be a reduced expression for $w\in W(A_4)$. Then the set of all reduced expressions for $w$ is given by
\[
\{s_1s_2s_3\textcolor{blue}{s_4s_2}, s_1\textcolor{rose}{s_2s_3s_2}s_4, \textcolor{blue}{s_1s_3}s_2s_3s_4, s_3s_1s_2s_3s_4\},
\]
where the \textcolor{blue}{blue} subwords indicate the location where a commutation is applied to obtain the next expression in the set and the \textcolor{rose}{pink} subword indicates the location where a braid move is applied to obtain the third expression from the second expression.  Note that $\ell(w) = 5$.
\end{example}


\subsection{Fully commutative elements}\label{subsec:FC}

Let $(W,S)$ be a Coxeter system of type $\Gamma$ and let $w \in W$. Following~\cite{Stembridge1996}, we define a relation $\sim$ on the set of reduced expressions for $w$. Let $\w$ and $\w'$ be two reduced expressions for $w$.  We define $\w \sim \w'$ if we can obtain $\w'$ from $\w$ by applying a single commutation move of the form $st \mapsto ts$, where $m(s,t) = 2$. Now, define the equivalence relation $\approx$ by taking the reflexive transitive closure of $\sim$. Each equivalence class under $\approx$ is called a \emph{commutation class}. Two reduced expressions are said to be \emph{commutation equivalent} if they are in the same commutation class.

\begin{example}\label{ex:nonFC}
Let $\w = s_1s_2s_3s_4s_5s_2$ and $\w' = s_1s_2s_3s_2s_4s_5$ be two different reduced expressions for $w\in W(A_5)$. Then $\w$ and $\w'$ are commutation equivalent since
\[
s_1s_2s_3s_4\textcolor{blue}{s_5s_2} = s_1s_2s_3\textcolor{blue}{s_4s_2}s_5 = s_1s_2s_3s_2s_4s_5,
\]
where the \textcolor{blue}{blue} subwords indicate the location where a commutation is applied to obtain the next expression. By applying a braid relation to $\w'$, we obtain
\[
s_1\textcolor{rose}{s_2s_3s_2}s_4s_5 = s_1s_3s_2s_3s_4s_5,
\]
where the location of the braid move has been highlighted in \textcolor{rose}{pink}. It turns out that the last reduced expression above is neither commutation equivalent to $\w$ nor $\w'$, and hence $w$ has more than one commutation class. Specifically, the commutation classes are
\[
\{s_1s_2s_3s_4s_5s_2, s_1s_2s_3s_4s_2s_5, s_1s_2s_3s_2s_4s_5\} ~\text{and}~ \{s_1s_3s_2s_3s_4s_5, s_3s_1s_2s_3s_4s_5\}.
\]
\end{example}

\begin{example}\label{ex:FC}
Let $\w = s_2s_1s_3s_4s_2$ be a reduced expression for $w\in W(A_4)$. In this case, $w$ has exactly five reduced expressions, including $\w$.  From this, it is easy to verify that all reduced expressions for $w$ are commutation equivalent. This implies that there is a unique commutation class for $w$:
\[
\{s_2s_1s_3s_4s_2, s_2s_3s_1s_4s_2, s_2s_1s_3s_2s_4, s_2s_3s_1s_2s_4, s_2s_3s_4s_1s_2\}.
\]
\end{example}

If $w$ has exactly one commutation class, then we say that $w$ is \emph{fully commutative}, or just FC. The set of all fully commutative elements of $W$ is denoted by $\FC(\Gamma)$, where $\Gamma$ is the corresponding Coxeter graph. For consistency, we say that a reduced expression $\w$ is FC if it is a reduced expression for some $w \in\FC(\Gamma)$. Note that the element in Example~\ref{ex:nonFC} is not FC since there are two commutation classes, while the element in Example~\ref{ex:FC} is FC.

Given some $w\in\FC(\Gamma)$ and a starting reduced expression for $w$, observe that the definition of fully commutative states that one only needs to perform commutations to obtain all the reduced expression for $w$, but the following result due to Stembridge~\cite{Stembridge1996} states that when $w$ is FC, performing commutations is the only possible way to obtain another reduced expression for $w$.

\begin{proposition}
\label{prop:Stembridge}
An element $w \in W$ is FC if and only if no reduced expression for $w$ contains $\underbrace{sts \cdots}_{m(s,t)}$ as a subword when $m(s,t) \geq 3$. \qed
\end{proposition}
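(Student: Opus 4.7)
The plan is to prove both directions using Matsumoto's theorem (Proposition~\ref{thm:Matsumoto}), with most of the work concentrated in producing a commutation invariant for the forward implication. For the forward direction I would argue by contrapositive. Suppose some reduced expression $\w$ for $w$ contains $\underbrace{sts\cdots}_{m(s,t)}$ as a subword with $m := m(s,t) \geq 3$. Applying the braid move at this subword produces another reduced expression $\w'$ for $w$, and I would show that $\w \not\approx \w'$, so $w$ has at least two commutation classes and is therefore not fully commutative.

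The invariant I would use is the projection $\pi_{s,t}(\w)$ obtained by deleting every letter of $\w$ other than $s$ or $t$. Since $m(s,t) \geq 3$, the generators $s$ and $t$ do not commute, so any commutation applied to a word either swaps two generators disjoint from $\{s,t\}$ or swaps one element of $\{s,t\}$ with a generator that commutes with it; in either case $\pi_{s,t}$ is unchanged. The braid move, on the other hand, replaces the contiguous block $\underbrace{sts\cdots}_m$ with $\underbrace{tst\cdots}_m$, and since $s \neq t$ these two length-$m$ strings differ, so $\pi_{s,t}(\w) \neq \pi_{s,t}(\w')$. This separates the two commutation classes.

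For the backward direction, suppose no reduced expression for $w$ contains $\underbrace{sts\cdots}_{m(s,t)}$ as a subword when $m(s,t) \geq 3$. Given any two reduced expressions $\w, \w'$ for $w$, Matsumoto's theorem supplies a sequence $\w = \w_0, \w_1, \ldots, \w_k = \w'$ in which consecutive terms differ by a single commutation or braid move, and each intermediate $\w_i$ is itself reduced for $w$. A braid move at some step would force the corresponding $\w_i$ to contain $\underbrace{sts\cdots}_{m(s,t)}$ as a subword with $m(s,t) \geq 3$, contradicting the hypothesis. Hence every step is a commutation, $\w \approx \w'$, and $w$ is fully commutative.

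The main obstacle is the choice of invariant in the forward direction: it must detect braid moves while being preserved by commutations. A naive candidate such as the multiset of generators appearing in $\w$ fails when $m(s,t)$ is even, since then $\underbrace{sts\cdots}_m$ and $\underbrace{tst\cdots}_m$ involve $s$ and $t$ with the same multiplicities. The $\{s,t\}$-projection is the minimal refinement that discriminates in every parity, and the one step that must be carried out carefully is verifying its invariance under commutations that move an element of $\{s,t\}$ past some other generator.
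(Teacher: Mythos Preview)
The paper does not actually prove this proposition: it is stated with a \qed and attributed to Stembridge~\cite{Stembridge1996}, so there is no ``paper's own proof'' to compare against. Your argument, however, is a correct and essentially standard proof of the result.

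Both directions are sound. For the backward direction your use of Matsumoto's theorem is exactly right: every intermediate word in a Matsumoto chain is reduced, so the hypothesis forbids braid moves at every step. For the forward direction, the $\{s,t\}$-projection $\pi_{s,t}$ is the right invariant. Your verification that commutations preserve it is correct (the key point being that $m(s,t)\geq 3$ rules out the commutation $st\mapsto ts$, so any commuting pair contains at most one element of $\{s,t\}$), and the braid move visibly changes the projection since the block consists entirely of $s$'s and $t$'s. One small clarification you might add: the braid move preserves both length and the group element, so $\w'$ is again a \emph{reduced} expression for $w$; this is implicit but worth stating so that the conclusion ``$w$ has at least two commutation classes'' is airtight.
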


In other words, an element is FC if and only if there is no opportunity to apply a braid move. For example, we can conclude that the element in Example~\ref{ex:nonFC} is not FC without actually computing the commutation classes since there is an opportunity to apply a braid move, which we highlighted in \textcolor{rose}{pink}.

Stembridge classified the irreducible Coxeter groups that contain only finitely many fully commutative elements, called the \emph{FC-finite Coxeter groups}. This paper is mainly concerned with $W(A_n)$, which is a finite group, so it has finitely many FC elements. However, there exist infinite Coxeter groups that contain only finitely many FC elements. For example, Coxeter groups of type $E_n$ with $n \geq 9$ are infinite, but they have only finitely many FC elements. It is well known that the number of FC elements in $W(A_{n})$ is equal to the $(n+1)$st Catalan number, where the $k$th Catalan number is given by $C_{k}=\frac{1}{k+1}\binom{2k}{k}$.


\subsection{Heaps}\label{subsec:Heaps}

Each reduced expression is associated with a labeled partially ordered set called a heap. Heaps provide a visual representation of the reduced expression while preserving the relations of the generators. We follow the development in~\cite{Ernst2010,Stembridge1996}.

Let $(W,S)$ be a Coxeter system. Suppose $\w = s_{x_1} s_{x_2} \cdots s_{x_k}$ is a reduced expression for $w \in W$, and as in~\cite{Stembridge1996}, define a partial ordering $\prec$ on the indices $\{1,\ldots,k\}$ by the transitive closure of the relation $j \prec i$ if $i < j$ and $s_{x_i}$ and $s_{x_j}$ do not commute. In particular, $j \prec i$ if $i < j$ and $s_{x_i} = s_{x_j}$, by transitivity and the fact that $\w$ is reduced. This partial order with $i$ labeled $s_{x_i}$ is called the \emph{heap} of $\w$. Note that for simplicity, we are omitting the labels of the underlying poset but retaining the labels of the corresponding generators.

\begin{example}\label{ex:FirstHeap}
Let $\w = s_2 s_1 s_3 s_2 s_4 s_5$ be a reduced expression for $w \in W(A_5)$.  Since $\ell(w)=6$, $\w$ is indexed by $\{1, 2, 3, 4, 5, 6\}$. We see that $4 \prec 3$ since $3 < 4$ and the third and fourth factors (namely, $s_3$ and $s_2$) do not commute. The labeled Hasse diagram for the heap of $\w$ is shown in Figure~\ref{fig:FirstHeapA}.
\end{example}

\begin{figure}[h!]
\centering
\begin{subfigure}[b]{0.3\linewidth}
\centering
\begin{tikzpicture}[scale=.55]
\draw (0,2)--(-1,1);
\draw (0,2)--(1,1);
\draw (-1,1)--(0,0);
\draw (0,0)--(1,1);
\draw (1,1)--(2,0);
\draw (2,0)--(3,-1);
\draw [fill=black] (0,2) circle (1.5pt);
\node[above] at (0,2) {$s_2$};
\draw [fill=black] (-1,1) circle (1.5pt);
\node[left] at (-1,1) {$s_1$};
\draw [fill=black] (1,1) circle (1.5pt);
\node[right] at (1,1) {$s_3$};
\draw [fill=black] (0,0) circle (1.5pt);
\node[below] at (0,0) {$s_2$};
\draw [fill=black] (2,0) circle (1.5pt);
\node[right] at (2,0) {$s_4$};
\draw [fill=black] (3,-1) circle (1.5pt);
\node[right] at (3,-1) {$s_5$};
\end{tikzpicture}
\caption{}
\label{fig:FirstHeapA}
\end{subfigure}
\begin{subfigure}[b]{0.3\linewidth}
\centering
\begin{tikzpicture}[scale=.55]
\heapblock{5}{1}{s_5}{black}
\heapblock{4}{2}{s_4}{black}
\heapblock{2}{2}{s_2}{black}
\heapblock{1}{3}{s_1}{black}
\heapblock{3}{3}{s_3}{black}
\heapblock{2}{4}{s_2}{black}
\end{tikzpicture}
\caption{}
\label{fig:FirstHeapB}
\end{subfigure}
\caption{Labeled Hasse diagram and lattice point representation of a heap.}
\label{fig:FirstHeap}
\end{figure}
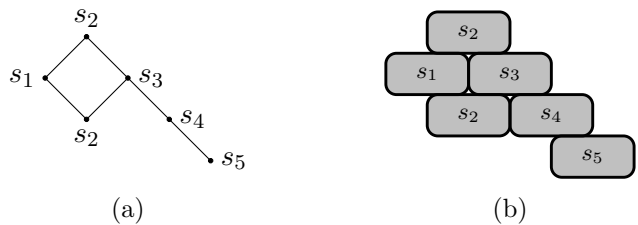

Let $\w$ be a fixed reduced expression for $w \in W(A_n)$. As in~\cite{Billey2007,Ernst2010}, we represent a heap for $\w$ as a set of lattice points embedded in $\{1,\ldots,n\} \times \N$. To do so, we assign coordinates $(x,y) \in \{1,\ldots,n\} \times \N$ to each entry of the labeled Hasse diagram for the heap of $\w$ in such a way that
\begin{enumerate}
\item An entry labeled $s_i$ in the heap has coordinates $(x,y)$ if and only if $x = i$;
\item An entry with coordinates $(x,y)$ is greater than an entry with coordinates $(x',y')$ in the heap if and only if $y > y'$.
\end{enumerate}
It follows from the definition that there is an edge in the Hasse diagram from $(x,y)$ to $(x',y')$ if and only if $x = x' \pm 1$, $y > y'$, and there are no entries $(x'', y'')$ such that $x'' \in \{x, x'\}$ and $y'< y'' < y$. This implies that we can completely reconstruct the edges of the Hasse diagram and the corresponding heap poset from a lattice point representation.

Note that our heaps are upside-down versions of the heaps that appear in~\cite{Billey2007} and several other papers. That is, in this paper, entries on top of a heap correspond to generators occurring to the left, as opposed to the right, in the corresponding reduced expression. One can form similar lattice point representations for heaps when $\Gamma$ is a straight line Coxeter graph.

Let $\w = s_{x_1} \cdots s_{x_k}$ be any reduced expression for $w \in W(A_{n})$. We let $H(\w)$ denote a lattice representation of the heap poset in $\{1,\ldots,n\} \times \N$ described in the paragraph above. There are many possible coordinate assignments for the entries of $H(\w)$, yet the $x$-coordinates for each entry will be fixed. If $s_{x_i}$ and $s_{x_j}$ are adjacent generators in the Coxeter graph with $i<j$, then we must place the point labeled by $s_{x_i}$ at a level that is \emph{above} the level of the point labeled by $s_{x_j}$. In particular, two entries labeled by the same generator may only differ by the amount of vertical space between them while maintaining their relative vertical position to adjacent entries in the heap.

Because generators that are not adjacent in the Coxeter graph commute, points whose $x$-coordinates differ by more than one can slide past each other or land at the same level. To visualize the labeled heap poset of a lattice representation we will enclose each entry of the heap in a block in such a way that if one entry covers another, the blocks overlap halfway.

It follows from~\cite[Proposition 2.2]{Stembridge1996} that heaps are well-defined up to commutation class.  In particular, there is a one-to-one correspondence between commutation classes and heaps.  That is, if $\w$ and $\w'$ are two reduced expressions for $w \in W$ that are in the same commutation class, then the heaps of $\w$ and $\w'$ are equal. Conversely, if $\w$ and $\w'$ belong to different commutation classes, then the corresponding heaps will be different.  In particular, if $w$ is FC, then it has a single commutation class, and so there is a unique heap associated to $w$. In this case, if $w$ is FC, then we may write $H(w)$ to denote the heap of any reduced expression for $w$. We will not make a distinction between $H(w)$ and its lattice point representation.

There are potentially many different ways to represent a heap, each differing by the vertical placement of blocks. For example, we can place blocks in vertical positions that are as high as possible, as low as possible, or some combination of high and low. When $w$ is FC, we wish to make a canonical choice for the representation of $H(w)$ by giving all blocks at the top of the heap the same vertical position and placing all other blocks as high as possible. Note that our canonical representation of heaps of FC elements corresponds precisely to the unique heap factorization of~\cite[Lemma 2.9]{Viennot1986} and to the Cartier--Foata normal form for monomials~\cite{Cartier1969,Green2006a}.

\begin{example}
The canonical lattice point representation of $H(\w)$ for the reduced expression given in Example~\ref{ex:FirstHeap} is shown in Figure~\ref{fig:FirstHeapB}.
\end{example}

\begin{example}\label{ex:TwoHeaps}
Consider $w \in W(A_5)$ from Example~\ref{ex:nonFC}. This element has two commutation classes, and hence two heaps as given in Figure~\ref{fig:TwoHeaps}, where we have color-coded in \textcolor{rose}{pink} the blocks of each heap that correspond to the braid relation $s_2s_3s_2 = s_3s_2s_3$. Figure~\ref{fig:TwoHeapsA} corresponds to the commutation class $\{s_1s_2s_3s_4s_5s_2, s_1s_2s_3s_4s_2s_5, s_1s_2s_3s_2s_4s_5\}$ while Figure~\ref{fig:TwoHeapsB} corresponds to the commutation class $\{s_1s_3s_2s_3s_4s_5, s_3s_1s_2s_3s_4s_5\}$.
\end{example}

\begin{figure}[h]
\centering
\begin{subfigure}[b]{0.3\textwidth}
\centering
\begin{tikzpicture}[scale=0.55]
  \heapblock{5}{1}{s_5}{black};
  \heapblock{4}{2}{s_4}{black};
  \heapblock{1}{5}{s_1}{black};
  \heapblock{3}{3}{s_3}{rose};
  \heapblock{2}{2}{s_2}{rose};
  \heapblock{2}{4}{s_2}{rose};
\end{tikzpicture}
\caption{}
\label{fig:TwoHeapsA}
\end{subfigure}
\begin{subfigure}[b]{0.3\textwidth}
\centering
\begin{tikzpicture}[scale=0.55]
  \heapblock{5}{1}{s_5}{black};
  \heapblock{4}{2}{s_4}{black};
  \heapblock{1}{5}{s_1}{black};
  \heapblock{3}{3}{s_3}{rose};
  \heapblock{2}{4}{s_2}{rose};
  \heapblock{3}{5}{s_3}{rose};
\end{tikzpicture}
\caption{}
\label{fig:TwoHeapsB}
\end{subfigure}
\caption{Two different heaps corresponding to the same non-FC element.}
\label{fig:TwoHeaps}
\end{figure}
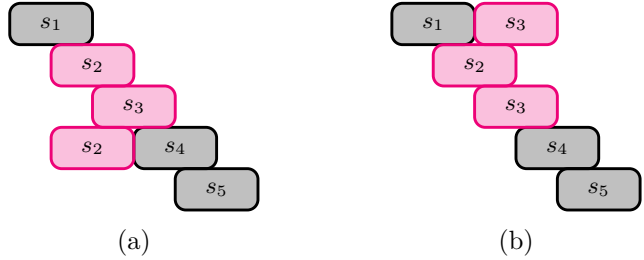

Given a heap, we can write a reduced expression for the corresponding group element by reading off the generators, starting at the top, moving left to right and then down. The expression we obtain is commutation equivalent to any expression to which the heap corresponds.


\subsection{The Temperley--Lieb algebra}\label{subsec:TL-algebra}

Given a Coxeter graph $\Gamma$, we can form the associative algebra $\TL(\Gamma)$, which we call the Temperley--Lieb algebra of type $\Gamma$~\cite{Graham1995}.  For a complete description of the construction of $\TL(\Gamma)$, see~\cite{Ernst2010,Graham1995,Green2006a}.  For our purposes, it suffices to define $\TL(A_{n})$ in terms of generators and relations.  We are using~\cite[Proposition~2.6]{Green2006a} (also see~\cite[Proposition~9.5]{Graham1995}) as our definition. 

The \emph{Temperley--Lieb algebra of type $A_{n}$}, denoted $\TL(A_{n})$, is the unital $\mathbb{Z}[\delta]$-algebra generated by $\{b_{1}, b_{2}, \dots, b_{n}\}$ with defining relations
\begin{enumerate}
\item $b_{i}^{2}=\delta b_{i}$ for all $i$;
\item $b_{i}b_{j}=b_{j}b_{i}$ if $|i-j|>1$;
\item $b_{i}b_{j}b_{i}=b_{i}$ if $|i-j|=1$.
\end{enumerate}

Suppose $w$ lies in $\FC(A_n)$ and has reduced expression $\w=s_{x_1}s_{x_2}\cdots s_{x_k}$.  Define the element $b_w \in\TL(A_n)$ via
\[
b_w=b_{x_1}b_{x_2}\cdots b_{x_k}.
\]
Notice that since $w$ is required to be fully commutative, the definition of $b_w$ is independent of choice of reduced expression for $w$.  It is well known (and follows from~\cite[Proposition 2.4]{Green2006a}) that the set $\{b_{w}\mid w \in \FC(A_n)\}$ forms a $\Z[\delta]$-basis for $\TL(A_n)$, called the \emph{monomial basis}.


\section{The Temperley--Lieb diagram algebra}\label{sec:TL-DiagramAlgebra}

Next, we establish our notation and introduce all of the terminology required to define an associative diagram algebra that is a faithful representation of $\TL(A_n)$.

Let $k$ be a nonnegative integer.  The \emph{standard $k$-box} is a rectangle with $2k$ points, called \emph{nodes}, labeled as in Figure~\ref{fig:k-box}.  We will refer to the top of the rectangle as the \emph{north face} and the bottom as the \emph{south face}.

\begin{figure}[!ht]
\centering
\begin{tikzpicture}[scale=.75]
\draw[gray,thick] (0,0) rectangle (8,2);
\foreach \x in {1,2,7} \filldraw (\x,0) circle (1pt);
\foreach \x in {1,2,7} \filldraw (\x,2) circle (1pt);
\draw (1,2) node[above]{\scriptsize $1$};
\draw (2,2) node[above]{\scriptsize $2$};
\draw (7,2) node[above]{\scriptsize $k$};
\draw (1,0) node[below]{\scriptsize $1'$};
\draw (2,0) node[below]{\scriptsize $2'$};
\draw (7,0) node[below]{\scriptsize $k'$};
\draw (4.25,2) node[above]{$\cdots$};
\draw (4.25,0) node[below]{$\cdots$};
\end{tikzpicture}
\caption{Standard $k$-box.}
\label{fig:k-box}
\end{figure}
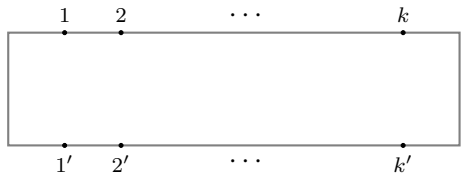

A \emph{concrete pseudo $k$-diagram} consists of a finite number of disjoint curves (planar), called \emph{edges}, embedded in the standard $k$-box with the following restrictions.
\begin{enumerate}
\item Every node of the box is the endpoint of exactly one edge, which meets the box transversely.
\item All other edges must be closed (isotopic to circles) and disjoint from the box.
\end{enumerate}

\begin{example}\label{ex:PseudoDiagram}
The diagram in Figure~\ref{fig:Psuedo} is an example of a concrete pseudo 6-diagram, whereas the diagram in Figure~\ref{fig:NotDiagram} does not represent a concrete pseudo 6-diagram since the diagram contains edges that are not disjoint (i.e., they intersect), node $4$ is the endpoint for more than one edge, nodes $3$ and $6'$ are not endpoints for any edge, and the edge leaving node $6$ does not have a node as its second endpoint.
\end{example}

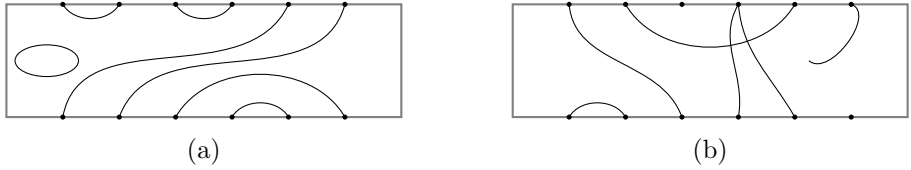
\begin{figure}[!h]
\centering
\begin{subfigure}[b]{0.4\linewidth}
\centering
\begin{tikzpicture}[scale=.75]
\draw[gray,thick] (0,0) rectangle (7,2);
\foreach \x in {1,2,3,4,5,6} \filldraw (\x,0) circle (1pt);
\foreach \x in {1,2,3,4,5,6} \filldraw (\x,2) circle (1pt);
\draw (3,0) to [bend left=60] (6,0);
\draw (2,0) to [out=70,in=-105] (6,2);
\draw (4,0) to [bend left=60] (5,0);
\draw (1,2) to [bend right=60] (2,2);
\draw (1,0) to [out=80,in=-115] (5,2);
\draw (3,2) to [bend right=60] (4,2);
\lp{0.5};
\end{tikzpicture}
\caption{}
\label{fig:Psuedo}
\end{subfigure}
\begin{subfigure}[b]{0.4\linewidth}
\centering
\begin{tikzpicture}[scale=.75]
\draw[gray,thick] (0,0) rectangle (7,2);
\foreach \x in {1,2,3,4,5,6} \filldraw (\x,0) circle (1pt);
\foreach \x in {1,2,3,4,5,6} \filldraw (\x,2) circle (1pt);
\draw (1,0) to [bend left=60] (2,0);
\draw (3,0) to [out=110,in=-85] (1,2);
\draw (4,0) to [out=80,in=-120] (4,2);
\draw (5,0) to [out=120,in=-85] (4,2);
\draw (2,2) to [bend right=60] (5,2);
\draw (6,2) to [out=-20,in=-40] (5.25,1);
\end{tikzpicture}
\caption{}
\label{fig:NotDiagram}
\end{subfigure}
\caption{Example of a concrete pseudo 6-diagram together with a non-example.}
\end{figure}

We now define an equivalence relation on the set of concrete pseudo $k$-diagrams. Two concrete pseudo $k$-diagrams are \emph{(isotopically) equivalent} if one concrete diagram can be obtained from the other by isotopically deforming the edges such that any intermediate diagram is also a concrete pseudo $k$-diagram. Note that an isotopy of the $k$-box is a 1-parameter family of homeomorphisms of the $k$-box to itself that are stationary on the boundary.

A \emph{pseudo $k$-diagram} is defined to be an equivalence class of equivalent concrete pseudo $k$-diagrams.  We denote the set of pseudo $k$-diagrams by $T_{k}$. Note that we used the word ``pseudo'' in our definition to emphasize that we allow loops to appear in our diagrams.

\begin{remark}\label{rem:VertEquiv}
When representing a pseudo $k$-diagram with a drawing, we pick an arbitrary concrete representative among a continuum of equivalent choices. When no confusion can arise, we will not make a distinction between a concrete pseudo $k$-diagram and the equivalence class that it represents. We say that two concrete pseudo $k$-diagrams are \emph{vertically equivalent} if they are equivalent in the above sense by an isotopy that preserves setwise each vertical cross-section of the $k$-box.
\end{remark}

\begin{example}
The concrete pseudo $5$-diagrams in Figure~\ref{fig:EquivDiagrams} are equivalent since each diagram can be obtained from the other by isotopically deforming the edges.
\end{example}

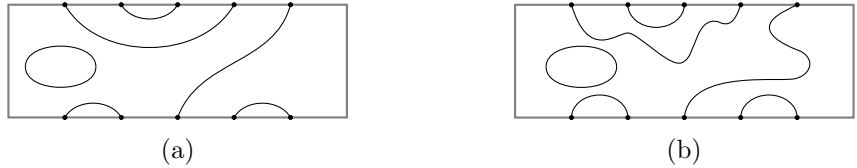
\begin{figure}[!h]
\centering
\begin{subfigure}[b]{0.4\linewidth}
\centering
\begin{tikzpicture}[scale=.75]
\draw[gray,thick] (0,0) rectangle (6,2);
\foreach \x in {1,2,3,4,5} \filldraw (\x,0) circle (1pt);
\foreach \x in {1,2,3,4,5} \filldraw (\x,2) circle (1pt);
\draw (1,0) to [bend left=60] (2,0);
\draw (3,0) to [out=70,in=-105] (5,2);
\draw (4,0) to [bend left=60] (5,0);
\draw (2,2) to [bend right=60] (3,2);
\draw (1,2) to [bend right=60] (4,2);
\draw (.3,.9) to [bend right=90] (1.55,.9);
\draw (.3,.9) to [bend left=90] (1.55,.9);
\end{tikzpicture}
\caption{}
\end{subfigure}
\begin{subfigure}[b]{0.4\linewidth}
\centering
\begin{tikzpicture}[scale=.75]
\draw[gray,thick] (0,-2) rectangle (6,0);
\foreach \x in {1,2,3,4,5} \filldraw (\x,0) circle (1pt);
\foreach \x in {1,2,3,4,5} \filldraw (\x,-2) circle (1pt);
\draw (1,0) .. controls (1.3,-1) and (1.8,-0.5) .. (2,-0.5) .. controls (2.2,-0.5) and (2.8,-1.2) .. (3,-1) .. controls (3.2,-0.9) and (3.2,-0.2) .. (3.6,-0.3) .. controls (3.8,-0.4) and (3.9,-0.3) .. (4,0);
\draw (2,0) arc (-180:0:0.5 and 0.4) ;
\draw (1,-2) arc (180:0:0.5 and 0.4) ;
\draw (4,-2) arc (180:0:0.5 and 0.4) ;
\draw (5,0) .. controls (4.8,-0.1) and (4,-0.4) .. (5,-0.8) .. controls (5.3,-0.9) and (5.3,-1.2) .. (5,-1.3) .. controls (4.8,-1.4) and (3.1,-1.1) .. (3,-2);
\draw (.55,-1.1) to [bend right=90] (1.8,-1.1);
\draw (.55,-1.1) to [bend left=90] (1.8,-1.1);
\end{tikzpicture}
\caption{}
\end{subfigure}
\caption{Isotopically equivalent diagrams.}
\label{fig:EquivDiagrams}
\end{figure}

Let $d$ be a diagram and let $e$ be an edge of $d$. If $e$ is a closed curve occurring in $d$, then we call $e$ a \emph{loop}.  For example, the diagram in Figure~\ref{fig:Psuedo} has a single loop.  If $e$ joins node $i$ in the north face to node $j'$ in the south face, then $e$ is called a \emph{propagating edge from $i$ to $j'$}. If $e$ is not propagating, loop or otherwise, it will be called \emph{non-propagating}. It is clear that there is a unique loop-free diagram consisting only of propagating edges.  This diagram, denoted by $d_0$, is depicted in Figure~\ref{fig:IdentityDiagram}.

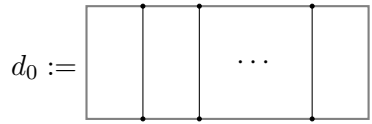
\begin{figure}[!ht]
\centering
\begin{align*}
d_{0} & := \begin{tabular}[c]{@{}l}\begin{tikzpicture}[scale=.75]
\draw[gray,thick] (0,0) rectangle (5,2);
\foreach \x in {1,2,4} \filldraw (\x,0) circle (1pt);
\foreach \x in {1,2,4} \filldraw (\x,2) circle (1pt);
\draw (3,1) node{$\cdots$};
\foreach \x in {1,2,4} \draw (\x,0) to (\x,2);
\end{tikzpicture}
\end{tabular}
\end{align*}
\caption{Unique loop-free diagram having only propagating edges.}\label{fig:IdentityDiagram}
\end{figure}

We wish to define an associative algebra that has the pseudo $k$-diagrams as a basis. Let $R$ be a commutative ring with $1$.  The associative algebra $\P_{k}$ over $R$ is the free $R$-module having $T_{k}$ as a basis. We define multiplication (referred to as diagram concatenation) in $\P_{k}$ by defining multiplication in the case where $d$ and $d'$ are basis elements, and then extending bilinearly. If $d, d' \in T_{k}$, the product $d'd$ is the element of $T_{k}$ obtained by placing $d'$ on top of $d$, so that node $i'$ of $d'$ coincides with node $i$ of $d$, and then removing the identified boundary to recover a standard $k$-box. If desired, one can then vertically rescale the resulting rectangle.

\begin{example}
Figure~\ref{fig:LoopDiagramMultiplication} depicts the product of two pseudo $5$-diagrams in $\P_5$.
\end{example}	

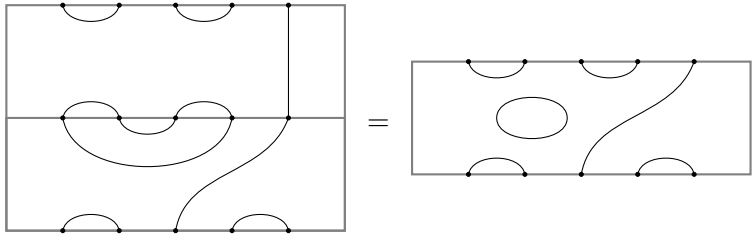
\begin{figure}[!ht]
\centering
\begin{align*}
\begin{tabular}[c]{@{}l}\begin{tikzpicture}[scale=.75]
\draw[gray,thick] (0,0) rectangle (6,2);
\draw[gray,thick] (0,0) rectangle (6,4);
\foreach \x in {1,2,3,4,5} \filldraw (\x,0) circle (1pt);
\foreach \x in {1,2,3,4,5} \filldraw (\x,2) circle (1pt);
\foreach \x in {1,2,3,4,5} \filldraw (\x,4) circle (1pt);
\draw (5,2) to (5,4);
\draw (3,0) to [out=80,in=250] (5,2);
\draw (1,0) to [bend left=80] (2,0);
\draw (4,0) to [bend left=80] (5,0);
\draw (1,2) to [bend left=80] (2,2);
\draw (3,2) to [bend left=80] (4,2);
\draw (1,2) to [bend right=80] (4,2);
\draw (2,2) to [bend right=80] (3,2);
\draw (1,4) to [bend right=80] (2,4);
\draw (3,4) to [bend right=80] (4,4);
\end{tikzpicture}\end{tabular}
& = \begin{tabular}[c]{l}\begin{tikzpicture}[scale=.75]
\draw[gray,thick] (0,0) rectangle (6,2);
\foreach \x in {1,2,3,4,5} \filldraw (\x,0) circle (1pt);
\foreach \x in {1,2,3,4,5} \filldraw (\x,2) circle (1pt);
\draw (1,0) to [bend left=80] (2,0);
\draw (3,0) to [out=80,in=250] (5,2);
\draw (4,0) to [bend left=80] (5,0);
\draw (1,2) to [bend right=80] (2,2);
\draw (3,2) to [bend right=80] (4,2);
\draw (1.5,1) to [bend right=90] (2.75,1);
\draw (1.5,1) to [bend left=90] (2.75,1);
\end{tikzpicture}
\end{tabular}
\end{align*}
\caption{Example of multiplication in $\P_5$.}\label{fig:LoopDiagramMultiplication}
\end{figure}

We now restrict our attention to the base ring $\Z[\delta]$, which is the ring of polynomials in $\delta$ with integer coefficients.  We define the \emph{Temperley--Lieb diagram algebra} $\DTL(A_{n})$ to be the associative $\Z[\delta]$-algebra equal to the quotient of $\P_{n+1}$ determined by the relation depicted in Figure~\ref{fig:LoopRelation}.

\begin{figure}[!ht]
\centering
\begin{tabular}{llc}
\begin{tikzpicture}[scale=.65]
\draw (2,0) to [bend right=90] (4,0);
\draw (2,0) to [bend left=90] (4,0);
\end{tikzpicture}
\end{tabular}
= $\delta$
\caption{Defining relation of $\DTL(A_{n})$.}\label{fig:LoopRelation}
\end{figure}

It is well known~\cite{Kauffman1987,Kauffman1990} that $\DTL(A_{n})$ is the free $\Z[\delta]$-module with basis given by the elements of $T_{n+1}$ having no loops. The multiplication is inherited from the multiplication on $\P_{n+1}$ except we multiply by a factor of $\delta$ for each resulting loop and then discard the loop. It is easy to see that the identity in $\DTL(A_{n})$ is the diagram $d_0$ given in Figure~\ref{fig:IdentityDiagram}. Technically, the identity diagram is the image of $d_0$ in the quotient algebra, but there is no danger of identifying the two diagrams.

\begin{example}\label{ex:DiagramMulitplication}
Figure~\ref{fig:DiagramMultiplication} depicts the product of three basis diagrams from $\DTL(A_{4})$.
\end{example}

\begin{figure}[!ht]
\centering
\begin{align*}
\begin{tabular}[c]{@{}l}\begin{tikzpicture}[scale=.75]
\draw[gray,thick] (0,0) rectangle (6,2);
\draw[gray,thick] (0,0) rectangle (6,4);
\draw[gray,thick] (0,0) rectangle (6,6);
\foreach \x in {1,2,3,4,5} \filldraw (\x,0) circle (1pt);
\foreach \x in {1,2,3,4,5} \filldraw (\x,2) circle (1pt);
\foreach \x in {1,2,3,4,5} \filldraw (\x,4) circle (1pt);
\foreach \x in {1,2,3,4,5} \filldraw (\x,6) circle (1pt);
\draw (5,2) to (5,4);
\draw (3,0) to [out=80,in=250] (5,2);
\draw (5,4) to [out=100,in=280] (3,6);
\draw (1,0) to [bend left=80] (2,0);
\draw (4,0) to [bend left=80] (5,0);
\draw (1,2) to [bend left=80] (2,2);
\draw (3,2) to [bend left=80] (4,2);
\draw (2,4) to [bend left=80] (3,4);
\draw (1,4) to [bend left=80] (4,4);
\draw (1,2) to [bend right=80] (4,2);
\draw (2,2) to [bend right=80] (3,2);
\draw (1,4) to [bend right=80] (4,4);
\draw (2,4) to [bend right=80] (3,4);
\draw (1,6) to [bend right=80] (2,6);
\draw (4,6) to [bend right=80] (5,6);
\end{tikzpicture}\end{tabular}
& = \delta^3 \begin{tabular}[c]{l}\begin{tikzpicture}[scale=.75]
\draw[gray,thick] (0,0) rectangle (6,2);
\foreach \x in {1,2,3,4,5} \filldraw (\x,0) circle (1pt);
\foreach \x in {1,2,3,4,5} \filldraw (\x,2) circle (1pt);
\draw (1,0) to [bend left=80] (2,0);
\draw (4,0) to [bend left=80] (5,0);
\draw (1,2) to [bend right=80] (2,2);
\draw (4,2) to [bend right=80] (5,2);
\draw (3,0) to (3,2);
\end{tikzpicture}
\end{tabular}
\end{align*}
\caption{Example of multiplication in $\DTL(A_{4})$.}\label{fig:DiagramMultiplication}
\end{figure}
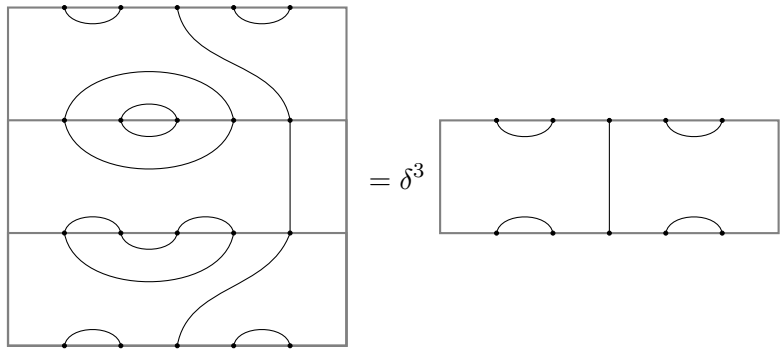

Define the \emph{simple diagrams} $d_{1}, d_{2}, \dots, d_{n}$ as in Figure~\ref{fig:SimpleDiagrams}.  Note that the simple diagrams are elements of the basis for $\DTL(A_{n})$.  It turns out~\cite{Kauffman1987,Kauffman1990} (and follows from Proposition~\ref{prop:TL=DTL} below) that the set of loop-free diagrams of $\DTL(A_n)$ is generated as a unital algebra by the set of simple diagrams $\{d_1,d_2,\ldots,d_n\}$.  In fact, $\DTL(A_n)$ is often defined to be the unital $\mathbb{Z}[\delta]$-algebra generated by the simple diagrams subject to the relation given in Figure~\ref{fig:LoopRelation}.

\begin{figure}[!ht]
\begin{center}
{\setlength\arraycolsep{1pt}
\begin{tabular}{@{}l@{}lc}
$d_{1}$ & $:=$ & \begin{tabular}[c]{@{}l}\begin{tikzpicture}[scale=.75]
\draw[gray,thick] (0,0) rectangle (9,2);
\foreach \x in {1,2,3,4,7,8} \filldraw (\x,0) circle (1pt);
\foreach \x in {1,2,3,4,7,8} \filldraw (\x,2) circle (1pt);
\draw (1,0) to [bend left=80] (2,0);
\draw (1,2) to [bend right=80] (2,2);
\draw (5.5,1) node{$\cdots$};
\foreach \x in {3,4,7,8} \draw (\x,0) to (\x,2);
\draw (1,2) node[above]{\scriptsize $\phantom{+}1\phantom{+}$};
\draw (2,2) node[above]{\scriptsize $\phantom{+}2\phantom{+}$};
\draw (7,2) node[above]{\scriptsize $\phantom{+}n\phantom{+}$};
\draw (8,2) node[above]{\scriptsize $n+1$};
\draw (8,0) node[below]{\phantom{\scriptsize $n+1$}};
\end{tikzpicture}\end{tabular}  \\
& & $\vdots$\vspace{0.5em} \\
$d_{i}$ & $:=$ & \begin{tabular}[c]{@{}l}\begin{tikzpicture}[scale=.75]
\draw[gray,thick] (0,0) rectangle (9,2);
\foreach \x in {1,3,4,5,6,8} \filldraw (\x,0) circle (1pt);
\foreach \x in {1,3,4,5,6,8} \filldraw (\x,2) circle (1pt);
\draw (4,0) to [bend left=80] (5,0);
\draw (4,2) to [bend right=80] (5,2);
\draw (2,1) node{$\cdots$};
\draw (7,1) node{$\cdots$};
\foreach \x in {1,3,6,8} \draw (\x,0) to (\x,2);
\draw (1,2) node[above]{\scriptsize $\phantom{+}1\phantom{+}$};
\draw (4,2) node[above]{\scriptsize $\phantom{+}i\phantom{+}$};
\draw (5,2) node[above]{\scriptsize $i+1$};
\draw (8,2) node[above]{\scriptsize $n+1$};
\draw (8,0) node[below]{\phantom{\scriptsize $n+1$}};
\end{tikzpicture}\end{tabular}  \\
& & $\vdots$ \\
$d_{n}$ & $:=$ & \begin{tabular}[c]{@{}l}\begin{tikzpicture}[scale=.75]
\draw[gray,thick] (0,0) rectangle (9,2);
\foreach \x in {1,2,5,6,7,8} \filldraw (\x,0) circle (1pt);
\foreach \x in {1,2,5,6,7,8} \filldraw (\x,2) circle (1pt);
\draw (7,0) to [bend left=80] (8,0);
\draw (7,2) to [bend right=80] (8,2);
\draw (3.5,1) node{$\cdots$};
\foreach \x in {1,2,5,6} \draw (\x,0) to (\x,2);
\draw (1,2) node[above]{\scriptsize $\phantom{+}1\phantom{+}$};
\draw (2,2) node[above]{\scriptsize $\phantom{+}2\phantom{+}$};
\draw (7,2) node[above]{\scriptsize $\phantom{+}n\phantom{+}$};
\draw (8,2) node[above]{\scriptsize $n+1$};
\draw (8,0) node[below]{\phantom{\scriptsize $n+1$}};
\end{tikzpicture}\end{tabular}
\end{tabular}}
\end{center}\caption{Simple diagrams.}\label{fig:SimpleDiagrams}
\end{figure}
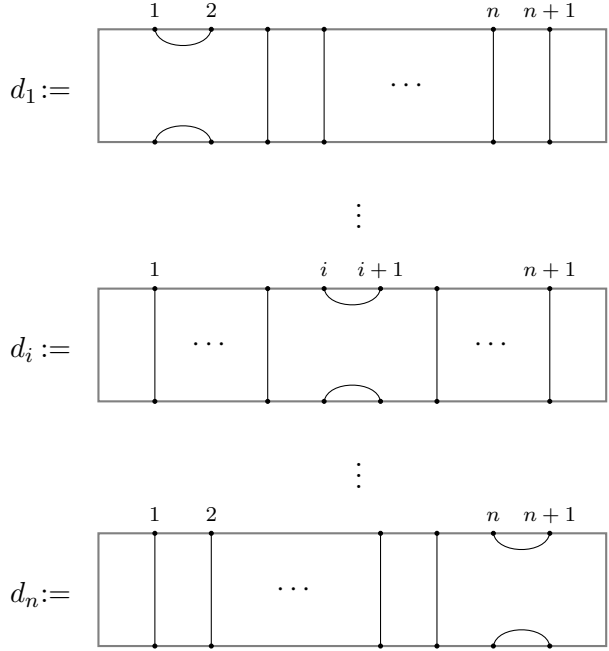

It is easy to verify that the simple diagrams of $\DTL(A_n)$ satisfy the defining relations of $\TL(A_n)$. That is, we have
\begin{enumerate}
\item $d_id_i = \delta d_i$ for all $i$;
\item $d_id_j = d_jd_i$ when $\abs{i-j} > 1$;
\item $d_id_jd_i = d_i$ when $\abs{i-j} = 1$.
\end{enumerate}
For example, Figure~\ref{fig:DiagramReln} illustrates the third relation above for the case $j=i+1$. Indeed, $\TL(A_n)$ and $\DTL(A_n)$ are isomorphic as $\mathbb{Z}[\delta]$-algebras (for instance, see~\cite[\S 3]{Kauffman1990}).

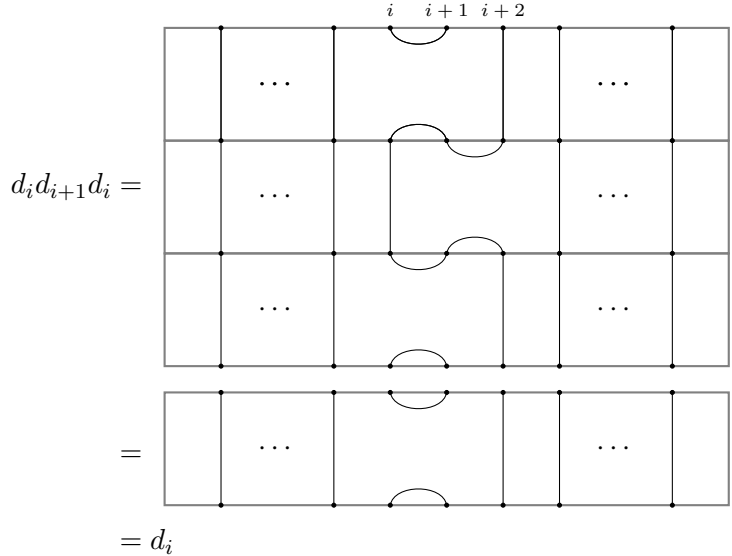
\begin{figure}[!ht]
\begin{align*}
d_i d_{i+1} d_i  &= 
\begin{tabular}[c]{l}
\begin{tikzpicture}[scale=.75]
\draw[gray,thick] (0,0) rectangle (10,2);
\draw[gray,thick] (0,2) rectangle (10,4);
\draw[gray,thick] (0,4) rectangle (10,6);
\foreach \x in {1,3,4,5,6,7,9} \filldraw (\x,0) circle (1pt);
\foreach \x in {1,3,4,5,6,7,9} \filldraw (\x,2) circle (1pt);
\foreach \x in {1,3,4,5,6,7,9} \filldraw (\x,4) circle (1pt);
\foreach \x in {1,3,4,5,6,7,9} \filldraw (\x,6) circle (1pt);
\draw (4,6) to [bend right=80] (5,6);
\draw (4,4) to [bend left=80] (5,4);
\foreach \x in {1,3,6,7,9} \draw (\x,4) to (\x,6);
\foreach \x in {2,8} \draw (\x,5) node{$\cdots$};
\draw (5,4) to [bend right=80] (6,4);
\draw (5,2) to [bend left=80] (6,2);
\foreach \x in {1,3,4,7,9} \draw (\x,2) to (\x,4);
\foreach \x in {2,8} \draw (\x,3) node{$\cdots$};
\draw (4,6) to [bend right=80] (5,6);
\draw (4,4) to [bend left=80] (5,4);
\foreach \x in {1,3,6,7,9} \draw (\x,4) to (\x,6);
\foreach \x in {2,8} \draw (\x,5) node{$\cdots$};
\draw (4,2) to [bend right=80] (5,2);
\draw (4,0) to [bend left=80] (5,0);
\foreach \x in {1,3,6,7,9} \draw (\x,0) to (\x,2);
\foreach \x in {2,8} \draw (\x,1) node{$\cdots$};
\draw (4,6) node[above]{\tiny $\phantom{+}i\phantom{+}$};
\draw (5,6) node[above]{\tiny $i+1$};
\draw (6,6) node[above]{\tiny $i+2$};
\end{tikzpicture}
\end{tabular}\\ 
& = \begin{tabular}[c]{l}
\begin{tikzpicture}[scale=.75]
\draw[gray,thick] (0,0) rectangle (10,2);
\foreach \x in {1,3,4,5,6,7,9} \filldraw (\x,0) circle (1pt);
\foreach \x in {1,3,4,5,6,7,9} \filldraw (\x,2) circle (1pt);
\draw (4,2) to [bend right=80] (5,2);
\draw (4,0) to [bend left=80] (5,0);
\foreach \x in {1,3,6,7,9} \draw (\x,0) to (\x,2);
\foreach \x in {2,8} \draw (\x,1) node{$\cdots$};
\end{tikzpicture}
\end{tabular}\\
&= d_i
\end{align*}
\caption{Special case of one of the relations in $\DTL(A_n)$.}\label{fig:DiagramReln}
\end{figure}

\begin{proposition}\label{prop:TL=DTL}
Let $\theta : \TL(A_n)\longrightarrow \DTL(A_{n})$ be the function determined by $b_{i}\longmapsto d_{i}$. Then $\theta$ is a well-defined $\mathbb{Z}[\delta]$-algebra isomorphism that maps the monomial basis of $\TL(A_{n})$ to the set of loop-free diagrams in $\DTL(A_{n})$. \qed
\end{proposition}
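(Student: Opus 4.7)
The plan is to prove the proposition in three steps: well-definedness of $\theta$ as a $\mathbb{Z}[\delta]$-algebra homomorphism, loop-freeness of the image of each monomial basis element, and bijectivity of the resulting correspondence. For well-definedness, I would invoke the universal property of the presentation of $\TL(A_n)$ by $b_1,\ldots,b_n$ subject to the three relations recalled in Section~\ref{subsec:TL-algebra}: it then suffices to verify that the simple diagrams $d_1,\ldots,d_n$ satisfy the three analogous relations in $\DTL(A_n)$, as already noted in the discussion following Figure~\ref{fig:SimpleDiagrams}. The relation $d_i^2=\delta d_i$ is a direct consequence of the loop relation in Figure~\ref{fig:LoopRelation}, the far-commutation $d_id_j=d_jd_i$ when $|i-j|>1$ is immediate from the disjointness of the arcs involved, and $d_id_jd_i=d_i$ when $|i-j|=1$ is demonstrated by Figure~\ref{fig:DiagramReln} (with the symmetric case handled analogously).

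Next, for each $w\in\FC(A_n)$ with reduced expression $\w=s_{x_1}\cdots s_{x_k}$, I would set $d_w:=\theta(b_w)=d_{x_1}\cdots d_{x_k}$ and show that $d_w$ is a loop-free basis diagram of $\DTL(A_n)$. The assignment $w\mapsto d_w$ is independent of the choice of reduced expression because $w$ is FC and $\theta$ carries the commutation moves of $\TL(A_n)$ to the far-commutations of simple diagrams. The substantive content of this step is the claim that no loop is produced during the concatenation. I would argue this by induction on $k$: a loop in the product would force two of the simple diagrams in the sequence to close up a pair of arcs which, traced back to $\w$, would exhibit a subword of the form $\underbrace{sts\cdots}_{m(s,t)}$ with $m(s,t)\geq 3$, contradicting Proposition~\ref{prop:Stembridge}.

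Finally, for bijectivity I would use that the simple diagrams generate $\DTL(A_n)$ as an algebra (as noted after Figure~\ref{fig:SimpleDiagrams}), so every loop-free basis diagram is expressible as some product $d_{x_1}\cdots d_{x_k}$; after reducing such a word using the three algebra relations — which cannot produce loops since we began with a loop-free target — one obtains a reduced FC word with the same image. Combined with the classical Catalan enumeration $|\FC(A_n)|=C_{n+1}$, equal to the number of loop-free $(n+1)$-diagrams, this forces $w\mapsto d_w$ to be a bijection from $\FC(A_n)$ onto the loop-free basis of $\DTL(A_n)$. Since $\theta$ is then a $\mathbb{Z}[\delta]$-algebra homomorphism carrying one $\mathbb{Z}[\delta]$-basis bijectively to another, it is an isomorphism. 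The main obstacle will be the loop-free claim in the second step, because loops can a priori arise from simple diagrams that are widely separated in the word but joined through chains of intermediate arcs; ruling out every such possibility is exactly where the FC hypothesis must be leveraged in a global rather than purely local fashion.
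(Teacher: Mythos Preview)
The paper does not supply its own proof of this proposition; it is stated with a trailing \qed\ and attributed to the literature (Kauffman) in the sentence immediately preceding it. Your outline therefore goes beyond what the paper does, and its overall shape---verify the three relations to get a well-defined homomorphism, then use generation by simple diagrams together with the Catalan count to obtain bijectivity---is the standard route and is sound.

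The reasoning in your second step, however, misidentifies the obstruction to loops. A closed loop in the concatenation $d_{x_1}\cdots d_{x_k}$ is produced by an occurrence of $d_i d_i$ after commutations, that is, by the word failing to be \emph{reduced}; it has nothing to do with braid subwords. For instance $d_1d_2d_1$ (a braid pattern) is loop-free and equals $d_1$, whereas $d_1d_1=\delta d_1$ is not loop-free. Thus loop-freeness already follows from reducedness of $\w$, and Proposition~\ref{prop:Stembridge} plays no role at that point; the FC hypothesis enters only to make $b_w$ independent of the chosen reduced expression and, via the enumeration $|\FC(A_n)|=C_{n+1}$, to match cardinalities. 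In fact your third step can absorb the second entirely: once $\theta$ is a surjection between free $\mathbb{Z}[\delta]$-modules of the same finite rank $C_{n+1}$ it is automatically an isomorphism, and writing $\theta(b_w)=\delta^{m_w}d'_w$ with $d'_w$ loop-free, comparing the basis $\{\theta(b_w)\}$ against the loop-free basis forces every $m_w=0$. So you need not argue loop-freeness diagrammatically at all.
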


Let $w$ be an element of $\FC(A_n)$ and define $d_w$ to be the image of the monomial $b_w$. It follows that if $w$ has $s_{x_1} \cdots s_{x_k}$ as reduced expression, then $d_w = d_{x_1} \cdots d_{x_k}$. That is, given a reduced factorization for $w \in \FC(A_n)$, we can easily obtain a reduced factorization of $d_w$ in terms of simple diagrams. However, given a loop-free diagram $d$, it is more difficult to obtain a factorization. Resolving this difficulty is the content of Section~\ref{sec:MainResults}.

Let $\w=s_{x_1}\cdots s_{x_k}$ be a reduced expression for $w\in\FC(A_n)$. For each simple diagram $d_{x_i}$, fix a concrete representation such that the propagating edges are straight and the pair of non-propagating edges never double-back on themselves (i.e., the non-propagating edges never intersect any vertical line more than once). Now, consider the concrete diagram that results from concatenating the concrete simple diagrams $d_{x_1},\ldots,d_{x_k}$, rescaling vertically to recover the standard $(n+1)$-box, but not deforming any of the non-propagating edges. Since $w$ is $\FC$ and vertical equivalence respects commutation, given any two reduced expressions for $w$, the corresponding concrete diagrams constructed as above will be vertically equivalent (see Remark~\ref{rem:VertEquiv}). We define the corresponding vertical equivalence class to be the \emph{simple representation of} $d_w$. The simple representation of $d_w$ is designed to replicate the structure of the corresponding heap.

\begin{example}\label{ex:SimpleRepn}
Let $\w=s_{1}s_{3}s_{2}s_{4}s_{3}$ be a reduced expression for $w\in\FC(A_4)$.  The factorization for $d_w$ determined by $\w$ together with its simple representation is shown in Figures~\ref{fig:SimpleRepnA} and \ref{fig:SimpleRepnB}, respectively. The resulting product is $d_w$, which is shown in Figure~\ref{fig:SimpleRepnC}. The shaded regions in Figures~\ref{fig:SimpleRepnB} and \ref{fig:SimpleRepnC} indicate that the pair of edges bounding the top and bottom of the region arise from the same factor.
\end{example}

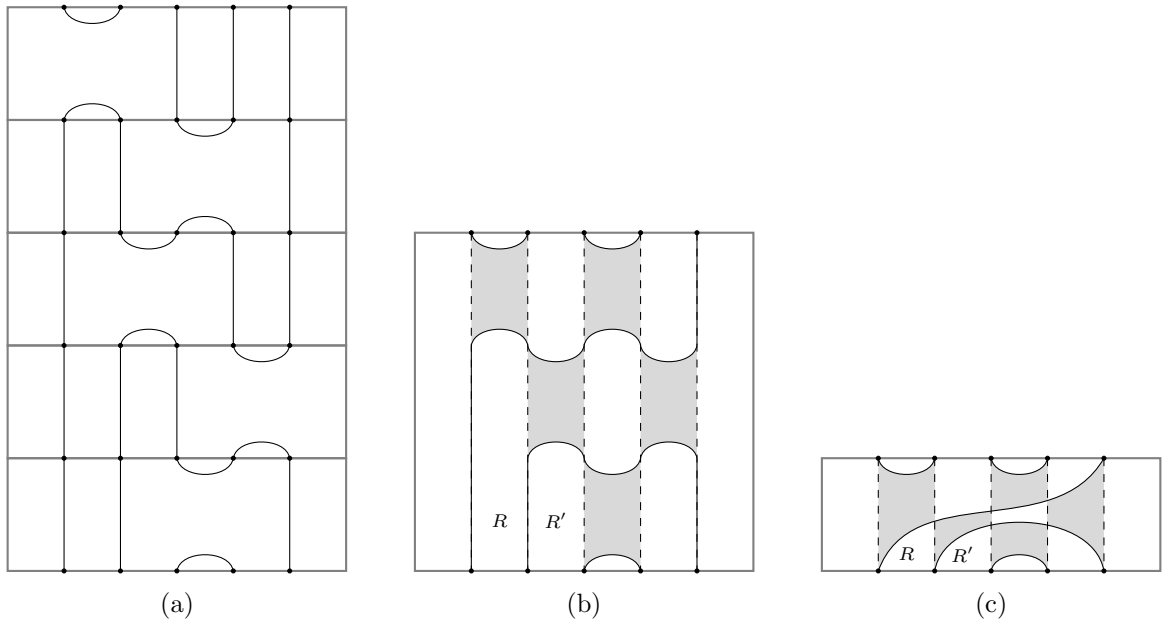
\begin{figure}[!h]
\centering
\begin{subfigure}[b]{0.32\linewidth}
\centering
\begin{tikzpicture}[scale=.75]
\draw[gray,thick] (0,0) rectangle (6,2);
\draw[gray,thick] (0,2) rectangle (6,4);
\draw[gray,thick] (0,4) rectangle (6,6);
\draw[gray,thick] (0,6) rectangle (6,8);
\draw[gray,thick] (0,8) rectangle (6,10);
\foreach \x in {1,2,3,4,5} \filldraw (\x,0) circle (1pt);
\foreach \x in {1,2,3,4,5} \filldraw (\x,2) circle (1pt);
\foreach \x in {1,2,3,4,5} \filldraw (\x,4) circle (1pt);
\foreach \x in {1,2,3,4,5} \filldraw (\x,6) circle (1pt);
\foreach \x in {1,2,3,4,5} \filldraw (\x,8) circle (1pt);
\foreach \x in {1,2,3,4,5} \filldraw (\x,10) circle (1pt);
\draw (1,0) to (1,2);
\draw (2,0) to (2,2);
\draw (3,0) to [bend left=80] (4,0);
\draw (3,2) to [bend right=80] (4,2);
\draw (5,0) to (5,2);
\draw (1,2) to (1,4);
\draw (2,2) to (2,4);
\draw (3,2) to (3,4);
\draw (4,2) to [bend left=80] (5,2);
\draw (4,4) to [bend right=80] (5,4);
\draw (1,4) to (1,6);
\draw (2,4) to [bend left=80] (3,4);
\draw (2,6) to [bend right=80] (3,6);
\draw (4,4) to (4,6);
\draw (5,4) to (5,6);
\draw (1,6) to (1,8);
\draw (2,6) to (2,8);
\draw (3,6) to [bend left=80] (4,6);
\draw (3,8) to [bend right=80] (4,8);
\draw (5,6) to (5,8);
\draw (1,8) to [bend left=80] (2,8);
\draw (1,10) to [bend right=80] (2,10);
\draw (3,8) to (3,10);
\draw (4,8) to (4,10);
\draw (5,8) to (5,10);
\end{tikzpicture}
\caption{}\label{fig:SimpleRepnA}
\end{subfigure}
\begin{subfigure}[b]{0.32\linewidth}
\centering
\begin{tikzpicture}[scale=.75]
\draw[gray,thick] (0,0) rectangle (6,6);
\fill[gray!30] (3,0) to [bend left=80] (4,0) to (4,2) to [bend left=80] (3,2);
\fill[gray!30] (2,2) to [bend left=80] (3,2) to (3,4) to [bend left=80] (2,4);
\fill[gray!30] (4,2) to [bend left=80] (5,2) to (5,4) to [bend left=80] (4,4);
\fill[gray!30] (1,4) to [bend left=80] (2,4) to (2,6) to [bend left=80] (1,6);
\fill[gray!30] (3,4) to [bend left=80] (4,4) to (4,6) to [bend left=80] (3,6);
\foreach \x in {1,2,3,4,5} \filldraw (\x,0) circle (1pt);
\foreach \x in {1,2,3,4,5} \filldraw (\x,6) circle (1pt);
\foreach \x in {1,2,3,4,5} \draw [dashed] (\x,6) to (\x,0);
\draw (3,0) to [bend left=80] (4,0);
\draw (3,2) to [bend right=80] (4,2);
\draw (4,2) to [bend left=80] (5,2);
\draw (4,4) to [bend right=80] (5,4);
\draw (2,2) to [bend left=80] (3,2);
\draw (2,4) to [bend right=80] (3,4);
\draw (3,4) to [bend left=80] (4,4);
\draw (3,6) to [bend right=80] (4,6);
\draw (1,4) to [bend left=80] (2,4);
\draw (1,6) to [bend right=80] (2,6);
\draw (5,4) to (5,6);
\draw (5,0) to (5,2);
\draw (2,0) to (2,2);
\draw (1,0) to (1,4);
\node at (1.5, .9) {\tiny $R$};
\node at (2.5, .9) {\tiny $R'$};
\end{tikzpicture}
\caption{}
\label{fig:SimpleRepnB}
\end{subfigure}
\begin{subfigure}[b]{0.32\linewidth}
\centering
\begin{tikzpicture}[scale=.75]
\draw[gray,thick] (0,0) rectangle (6,2);
\begin{scope}
\clip{(1,0) rectangle (2,2)};
\fill[gray!30] (1,0) to [out=70, in=-120] (5,2) to (2,2) to [bend left=80] (1,2);
\end{scope}
\begin{scope}
\clip{(2,0) rectangle (3,2)};
\fill[gray!30](2,0)to [bend left=80] (5,0)--(5,2) to [in=70, out=-120](1,0);
\end{scope}
\begin{scope}
\clip{(3,0) rectangle (4,2)};
\fill[gray!30](3,0) to [bend left=80] (4,0)--(5,0)to [bend right=80](2,0);
\fill[gray!30](3,2) to [bend right=80] (4,2)--(5,2) to [in=70, out=-120] (1,0);
\end{scope}
\begin{scope}
\clip{(4,0) rectangle (5,2)};
\fill[gray!30](1,0) to [out=70, in=-120] (5,2) -- (5,0) to [bend right=80] (2,0);
\end{scope}
\foreach \x in {1,2,3,4,5} \filldraw (\x,0) circle (1pt);
\foreach \x in {1,2,3,4,5} \filldraw (\x,2) circle (1pt);
\foreach \x in {1,2,3,4,5} \draw [dashed] (\x,2) to (\x,0);
\draw (1,0) to [out=70, in=-120] (5,2);
\draw (2,0) to [bend left=80] (5,0);
\draw (3,0) to [bend left=80] (4,0);
\draw (1,2) to [bend right=80] (2,2);
\draw (3,2) to [bend right=80] (4,2);
\node at (1.5, .3) {\tiny $R$};
\node at (2.5, .3) {\tiny $R'$};
\end{tikzpicture}
\caption{}
\label{fig:SimpleRepnC}
\end{subfigure}
\caption{Multiplication of simple diagrams together with the corresponding simple representation and resulting product. The $1$-regions of the simple representation and product have been shaded.}\label{fig:SimpleRepn}
\end{figure}

In light of Proposition~\ref{prop:TL=DTL}, it follows that if $d$ is a loop-free diagram from $\DTL(A_n)$, then there exists a unique $w\in\FC(A_n)$ such that $d_w=d$. The upshot is that it makes sense to refer to the simple representation of $d$.


\section{Main Results}\label{sec:MainResults}

In this section, we assume that all diagrams are loop-free and that no edge ever double-backs on itself (i.e., other than vertical propagating edges, edges never intersect any vertical line more than once). If $d$ is a $k$-diagram, we section the corresponding $k$-box into \emph{columns} by connecting node $i$ in the north face to node $i'$ in the south face. The $i$th column $C_i$ lies between nodes $i$ and $i+1$. The connected components of the complement of the edges in each column are called \emph{regions}. For example, the columns and regions for the diagram given in Figure~\ref{fig:DiagramToGraphToHeapA} are depicted in Figure~\ref{fig:DiagramToGraphToHeapB}.

\begin{figure}[!h]
\centering
\begin{subfigure}[b]{0.49\linewidth}
\centering
\begin{tikzpicture}[scale=.75]
\draw[gray, thick] (0,0) rectangle (10,3);
\foreach \x in {1,2,3,4,5,6,7,8,9} \filldraw (\x,0) circle (1pt);
\foreach \x in {1,2,3,4,5,6,7,8,9} \filldraw (\x,3) circle (1pt);
\draw (1,0) to [bend left=60] (2,0);
\draw (3,3) to [bend right=60] (6, 3);
\draw (4,3) to [bend right=60] (5,3);
\draw (6,0) to [bend left=60] (9,0);
\draw (7,0) to [bend left=60] (8,0);
\draw (1,3) to [out=-80,in=90]  (3,0);
\draw (2,3) to [out=-80,in=90]  (4,0);
\draw (9,3) to [out=-110,in=60]  (5,0);
\draw (7,3) to [bend right=60] (8,3);
\end{tikzpicture}
\caption{A loop-free diagram $d$}
\label{fig:DiagramToGraphToHeapA}
\end{subfigure}
\begin{subfigure}[b]{0.49\linewidth}
\centering
\vspace{.25cm}
\begin{tikzpicture}[scale=.75]
\draw[gray, thick] (0,0) rectangle (10,3);
\foreach \x in {1,2,3,4,5,6,7,8,9} \filldraw (\x,0) circle (1pt);
\foreach \x in {1,2,3,4,5,6,7,8,9} \filldraw (\x,3) circle (1pt);
\draw (1,0) to [bend left=60] (2,0);
\draw (3,3) to [bend right=60] (6, 3);
\draw (4,3) to [bend right=60] (5,3);
\draw (6,0) to [bend left=60] (9,0);
\draw (7,0) to [bend left=60] (8,0);
\draw (1,3) to [out=-80,in=90]  (3,0);
\draw (2,3) to [out=-80,in=90]  (4,0);
\draw (9,3) to [out=-110,in=60]  (5,0);
\draw (7,3) to [bend right=60] (8,3);
\draw [dashed] (1,0) to (1,3);
\draw [dashed] (2,0) to (2,3);
\draw [dashed] (3,0) to (3,3);
\draw [dashed] (4,0) to (4,3);
\draw [dashed] (5,0) to (5,3);
\draw [dashed] (6,0) to (6,3);
\draw [dashed] (7,0) to (7,3);
\draw [dashed] (8,0) to (8,3);
\draw [dashed] (9,0) to (9,3);
\node at (7.5, 1.15) {\tiny $R'$};
\node at (6.5, .9) {\tiny $R$};
\end{tikzpicture}
\caption{Regions $R$ and $R'$ are horizontally adjacent}
\label{fig:DiagramToGraphToHeapB}
\end{subfigure}
\par\bigskip
\begin{subfigure}[b]{0.49\linewidth}
\centering
\begin{tikzpicture}[scale=.75]
\begin{scope}
\clip{(1,0) rectangle (2,3)};
\fill[gray!30] (1,3) to [out=-80,in=90]  (3,0) to (2,0) to [bend right=60] (1,0);
\end{scope}
\begin{scope}
\clip{(2,0) rectangle (3,3)};
\fill[gray!30](2,1.57)--(3,0)--(3,1.4)--(2,3);
\fill[white] (1,3) to [out=-80,in=90]  (3,0);
\fill[white](2,3) to [out=-80,in=90]  (4,0);
\fill[gray!30](2,1.7)--(3,1.3)--(3,1.58)--(2,2.45);
\end{scope}
\begin{scope}
\clip{(3,0) rectangle (4,3)};
\fill[gray!30](3,1.57)--(4,0)--(4,3)--(3,3);
\fill[white] (3,3) to [bend right=60] (6, 3);
\fill[white](2,3) to [out=-80,in=90]  (4,0);
\end{scope}
\begin{scope}
\clip{(4,0) rectangle (5,3)};
\fill[gray!30](3,3) to [bend right=60] (6, 3);
\fill[white](4,3) to [bend right=60] (5,3);
\end{scope}
\begin{scope}
\clip{(5,0) rectangle (6,3)};
\fill[gray!30](5,0)--(6,.9)--(6,3)--(5,3);
\fill[white] (3,3) to [bend right=60] (6, 3);
\fill[white](9,3) to [out=-110,in=60]  (5,0);
\end{scope}
\begin{scope}
\clip{(6,0) rectangle (7,3)};
\fill[gray!30](9,3) to [out=-110,in=60]  (5,0) to (6,0) to [bend left=60] (9,0);
\end{scope}
\begin{scope}
\clip{(7,0) rectangle (8,3)};
\fill[gray!30](7,1.5)--(8,1.9) -- (8,3) -- (7,3);
\fill[gray!30](9,3) to [out=-110,in=60]  (5,0);
\fill[white] (7,3) to [bend right=60] (8,3);
\fill[gray!30](6,0) to [bend left=60] (9,0);
\fill[white] (7,0) to [bend left=60] (8,0);
\end{scope}
\begin{scope}
\clip{(8,0) rectangle (9,3)};
\fill[gray!30](8,0)--(9,0)--(9,3)--(8,1.9);
\fill[white] (6,0) to [bend left=60] (9,0);
\fill[white](9,3) to [out=-110,in=60]  (5,0);
\end{scope}
\draw[gray, thick] (0,0) rectangle (10,3);
\foreach \x in {1,2,3,4,5,6,7,8,9} \filldraw (\x,0) circle (1pt);
\foreach \x in {1,2,3,4,5,6,7,8,9} \filldraw (\x,3) circle (1pt);
\draw (1,0) to [bend left=60] (2,0);
\draw (3,3) to [bend right=60] (6, 3);
\draw (4,3) to [bend right=60] (5,3);
\draw (6,0) to [bend left=60] (9,0);
\draw (7,0) to [bend left=60] (8,0);
\draw (1,3) to [out=-80,in=90]  (3,0);
\draw (2,3) to [out=-80,in=90]  (4,0);
\draw (9,3) to [out=-110,in=60]  (5,0);
\draw (7,3) to [bend right=60] (8,3);
\draw [dashed] (1,0) to (1,3);
\draw [dashed] (2,0) to (2,3);
\draw [dashed] (3,0) to (3,3);
\draw [dashed] (4,0) to (4,3);
\draw [dashed] (5,0) to (5,3);
\draw [dashed] (6,0) to (6,3);
\draw [dashed] (7,0) to (7,3);
\draw [dashed] (8,0) to (8,3);
\draw [dashed] (9,0) to (9,3);
\node at (4.5, 2.5) {\tiny $A$};
\node at (7.5, 2.25) {\tiny $D$};
\node at (7.5, 0.5) {\tiny $I$};
\node at (8.5, 1.15) {\tiny $G$};
\node at (6.5, .9) {\tiny $F$};
\node at (5.5, 1.5) {\tiny $C$};
\node at (3.5, 1.85) {\tiny $B$};
\node at (2.5, 1.55) {\tiny $E$};
\node at (1.5, 1.15) {\tiny $H$};
\end{tikzpicture}
\caption{Shaded $1$-regions of $d$}
\label{fig:DiagramToGraphToHeapC}
\end{subfigure}
\begin{subfigure}[b]{0.49\linewidth}
\centering
\begin{tikzpicture}[scale=.75]
\begin{scope}
\clip{(1,0) rectangle (2,3)};
\fill[gray!30] (1,3) to [out=-80,in=90]  (3,0) to (2,0) to [bend right=60] (1,0);
\end{scope}
\begin{scope}
\clip{(2,0) rectangle (3,3)};
\fill[gray!30](2,1.57)--(3,0)--(3,1.4)--(2,3);
\fill[white] (1,3) to [out=-80,in=90]  (3,0);
\fill[white](2,3) to [out=-80,in=90]  (4,0);
\fill[gray!30](2,1.7)--(3,1.3)--(3,1.58)--(2,2.45);
\end{scope}
\begin{scope}
\clip{(3,0) rectangle (4,3)};
\fill[gray!30](3,1.57)--(4,0)--(4,3)--(3,3);
\fill[white] (3,3) to [bend right=60] (6, 3);
\fill[white](2,3) to [out=-80,in=90]  (4,0);
\end{scope}
\begin{scope}
\clip{(4,0) rectangle (5,3)};
\fill[gray!30](3,3) to [bend right=60] (6, 3);
\fill[white](4,3) to [bend right=60] (5,3);
\end{scope}
\begin{scope}
\clip{(5,0) rectangle (6,3)};
\fill[gray!30](5,0)--(6,.9)--(6,3)--(5,3);
\fill[white] (3,3) to [bend right=60] (6, 3);
\fill[white](9,3) to [out=-110,in=60]  (5,0);
\end{scope}
\begin{scope}
\clip{(6,0) rectangle (7,3)};
\fill[gray!30](9,3) to [out=-110,in=60]  (5,0) to (6,0) to [bend left=60] (9,0);
\end{scope}
\begin{scope}
\clip{(7,0) rectangle (8,3)};
\fill[gray!30](7,1.5)--(8,1.9) -- (8,3) -- (7,3);
\fill[gray!30](9,3) to [out=-110,in=60]  (5,0);
\fill[white] (7,3) to [bend right=60] (8,3);
\fill[gray!30](6,0) to [bend left=60] (9,0);
\fill[white] (7,0) to [bend left=60] (8,0);
\end{scope}
\begin{scope}
\clip{(8,0) rectangle (9,3)};
\fill[gray!30](8,0)--(9,0)--(9,3)--(8,1.9);
\fill[white] (6,0) to [bend left=60] (9,0);
\fill[white](9,3) to [out=-110,in=60]  (5,0);
\end{scope}
\draw[gray, thick] (0,0) rectangle (10,3);
\foreach \x in {1,2,3,4,5,6,7,8,9} \filldraw (\x,0) circle (1pt);
\foreach \x in {1,2,3,4,5,6,7,8,9} \filldraw (\x,3) circle (1pt);
\draw (1,0) to [bend left=60] (2,0);
\draw (3,3) to [bend right=60] (6, 3);
\draw (4,3) to [bend right=60] (5,3);
\draw (6,0) to [bend left=60] (9,0);
\draw (7,0) to [bend left=60] (8,0);
\draw (1,3) to [out=-80,in=90]  (3,0);
\draw (2,3) to [out=-80,in=90]  (4,0);
\draw (9,3) to [out=-110,in=60]  (5,0);
\draw (7,3) to [bend right=60] (8,3);
\draw [dashed] (1,0) to (1,3);
\draw [dashed] (2,0) to (2,3);
\draw [dashed] (3,0) to (3,3);
\draw [dashed] (4,0) to (4,3);
\draw [dashed] (5,0) to (5,3);
\draw [dashed] (6,0) to (6,3);
\draw [dashed] (7,0) to (7,3);
\draw [dashed] (8,0) to (8,3);
\draw [dashed] (9,0) to (9,3);
\node[circle, fill=black, inner sep=1.5pt](4-1) at (4.5, 2.5) {};
\node[circle, fill=black, inner sep=1.5pt](7-1) at (7.5, 2.25) {};
\node[circle, fill=black, inner sep=1.5pt](7-2) at (7.5, 0.5) {};
\node[circle, fill=black, inner sep=1.5pt](8-1) at (8.5, 1.15) {};
\node[circle, fill=black, inner sep=1.5pt](6-1) at (6.5, 0.9) {};
\node[circle, fill=black, inner sep=1.5pt](5-1) at (5.5, 1.5) {};
\node[circle, fill=black, inner sep=1.5pt](3-1) at (3.5, 1.85) {};
\node[circle, fill=black, inner sep=1.5pt](2-1) at (2.5, 1.55) {};
\node[circle, fill=black, inner sep=1.5pt](1-1) at (1.5, 1.15) {};
\draw[-stealth] (4-1) to (3-1);
\draw[-stealth] (3-1) to (2-1);
\draw[-stealth] (2-1) to (1-1);
\draw[-stealth] (4-1) to (5-1);
\draw[-stealth] (5-1) to (6-1);
\draw[-stealth] (6-1) to (7-2);
\draw[-stealth] (7-1) to (6-1);
\draw[-stealth] (7-1) to (8-1);
\draw[-stealth] (8-1) to (7-2);
\end{tikzpicture}
\caption{Directed graph $G_d$}
\label{fig:DiagramToGraphToHeapD}
\end{subfigure}
\par\bigskip
\begin{subfigure}[b]{0.49\linewidth}
\centering
\begin{tikzpicture}[scale=.75]
\begin{scope}
\clip{(1,0) rectangle (2,3)};
\fill[gray!30] (1,3) to [out=-80,in=90]  (3,0) to (2,0) to [bend right=60] (1,0);
\end{scope}
\begin{scope}
\clip{(2,0) rectangle (3,3)};
\fill[gray!30](2,1.57)--(3,0)--(3,1.4)--(2,3);
\fill[white] (1,3) to [out=-80,in=90]  (3,0);
\fill[white](2,3) to [out=-80,in=90]  (4,0);
\fill[gray!30](2,1.7)--(3,1.3)--(3,1.58)--(2,2.45);
\end{scope}
\begin{scope}
\clip{(3,0) rectangle (4,3)};
\fill[gray!30](3,1.57)--(4,0)--(4,3)--(3,3);
\fill[white] (3,3) to [bend right=60] (6, 3);
\fill[white](2,3) to [out=-80,in=90]  (4,0);
\end{scope}
\begin{scope}
\clip{(4,0) rectangle (5,3)};
\fill[gray!30](3,3) to [bend right=60] (6, 3);
\fill[white](4,3) to [bend right=60] (5,3);
\end{scope}
\begin{scope}
\clip{(5,0) rectangle (6,3)};
\fill[gray!30](5,0)--(6,.9)--(6,3)--(5,3);
\fill[white] (3,3) to [bend right=60] (6, 3);
\fill[white](9,3) to [out=-110,in=60]  (5,0);
\end{scope}
\begin{scope}
\clip{(6,0) rectangle (7,3)};
\fill[gray!30](9,3) to [out=-110,in=60]  (5,0) to (6,0) to [bend left=60] (9,0);
\end{scope}
\begin{scope}
\clip{(7,0) rectangle (8,3)};
\fill[gray!30](7,1.5)--(8,1.9) -- (8,3) -- (7,3);
\fill[gray!30](9,3) to [out=-110,in=60]  (5,0);
\fill[white] (7,3) to [bend right=60] (8,3);
\fill[gray!30](6,0) to [bend left=60] (9,0);
\fill[white] (7,0) to [bend left=60] (8,0);
\end{scope}
\begin{scope}
\clip{(8,0) rectangle (9,3)};
\fill[gray!30](8,0)--(9,0)--(9,3)--(8,1.9);
\fill[white] (6,0) to [bend left=60] (9,0);
\fill[white](9,3) to [out=-110,in=60]  (5,0);
\end{scope}
\draw[gray, thick] (0,0) rectangle (10,3);
\foreach \x in {1,2,3,4,5,6,7,8,9} \filldraw (\x,0) circle (1pt);
\foreach \x in {1,2,3,4,5,6,7,8,9} \filldraw (\x,3) circle (1pt);
\draw (1,0) to [bend left=60] (2,0);
\draw (3,3) to [bend right=60] (6, 3);
\draw (4,3) to [bend right=60] (5,3);
\draw (6,0) to [bend left=60] (9,0);
\draw (7,0) to [bend left=60] (8,0);
\draw (1,3) to [out=-80,in=90]  (3,0);
\draw (2,3) to [out=-80,in=90]  (4,0);
\draw (9,3) to [out=-110,in=60]  (5,0);
\draw (7,3) to [bend right=60] (8,3);
\draw [dashed] (1,0) to (1,3);
\draw [dashed] (2,0) to (2,3);
\draw [dashed] (3,0) to (3,3);
\draw [dashed] (4,0) to (4,3);
\draw [dashed] (5,0) to (5,3);
\draw [dashed] (6,0) to (6,3);
\draw [dashed] (7,0) to (7,3);
\draw [dashed] (8,0) to (8,3);
\draw [dashed] (9,0) to (9,3);
\node[circle, fill=black, inner sep=1.5pt, label={[xshift=0cm, yshift=-.6cm]\tiny $s_4$}](4-1) at (4.5, 2.5) {};
\node[circle, fill=black, inner sep=1.5pt, label={[xshift=0cm, yshift=-.05cm]\tiny $s_7$}](7-1) at (7.5, 2.1) {};
\node[circle, fill=black, inner sep=1.5pt, label={[xshift=0cm, yshift=0cm]\tiny $s_7$}](7-2) at (7.5, 0.5) {};
\node[circle, fill=black, inner sep=1.5pt, label={[xshift=0cm, yshift=0cm]\tiny $s_8$}](8-1) at (8.5, 1.15) {};
\node[circle, fill=black, inner sep=1.5pt, label={[xshift=0cm, yshift=.1cm]\tiny $s_6$}](6-1) at (6.5, 0.9) {};
\node[circle, fill=black, inner sep=1.5pt, label={[xshift=0cm, yshift=-.55cm]\tiny $s_5$}](5-1) at (5.5, 1.5) {};
\node[circle, fill=black, inner sep=1.5pt, label={[xshift=0cm, yshift=-.5cm]\tiny $s_3$}](3-1) at (3.5, 1.85) {};
\node[circle, fill=black, inner sep=1.5pt, label={[xshift=.2cm, yshift=-.5cm]\tiny $s_2$}](2-1) at (2.5, 1.55) {};
\node[circle, fill=black, inner sep=1.5pt, label={[xshift=0cm, yshift=-.55cm]\tiny $s_1$}](1-1) at (1.5, 1.15) {};
\draw[-stealth] (4-1) to (3-1);
\draw[-stealth] (3-1) to (2-1);
\draw[-stealth] (2-1) to (1-1);
\draw[-stealth] (4-1) to (5-1);
\draw[-stealth] (5-1) to (6-1);
\draw[-stealth] (6-1) to (7-2);
\draw[-stealth] (7-1) to (6-1);
\draw[-stealth] (7-1) to (8-1);
\draw[-stealth] (8-1) to (7-2);
\end{tikzpicture}
\caption{Labeled directed graph $G_d^S$}
\label{fig:DiagramToGraphToHeapE}
\end{subfigure}
\begin{subfigure}[b]{0.49\linewidth}
\centering
\begin{tikzpicture}[scale=.55]
\heapblock{1}{1}{s_1}{black};
\heapblock{7}{1}{s_7}{black};
\heapblock{2}{2}{s_2}{black};
\heapblock{6}{2}{s_6}{black};
\heapblock{8}{3}{s_8}{black};
\heapblock{3}{3}{s_3}{black};
\heapblock{5}{3}{s_5}{black};
\heapblock{7}{4}{s_7}{black};
\heapblock{4}{4}{s_4}{black};
\end{tikzpicture}
\caption{Heap corresponding to $G_d^S$}
\label{fig:DiagramToGraphToHeapF}
\end{subfigure}
\caption{Shaded $1$-regions, directed graph $G_d$, and labeled directed graph $G_d^S$ for a diagram $d$ together with the lattice point representation of the corresponding heap.}
\label{fig:DiagramToGraphToHeap}
\end{figure}
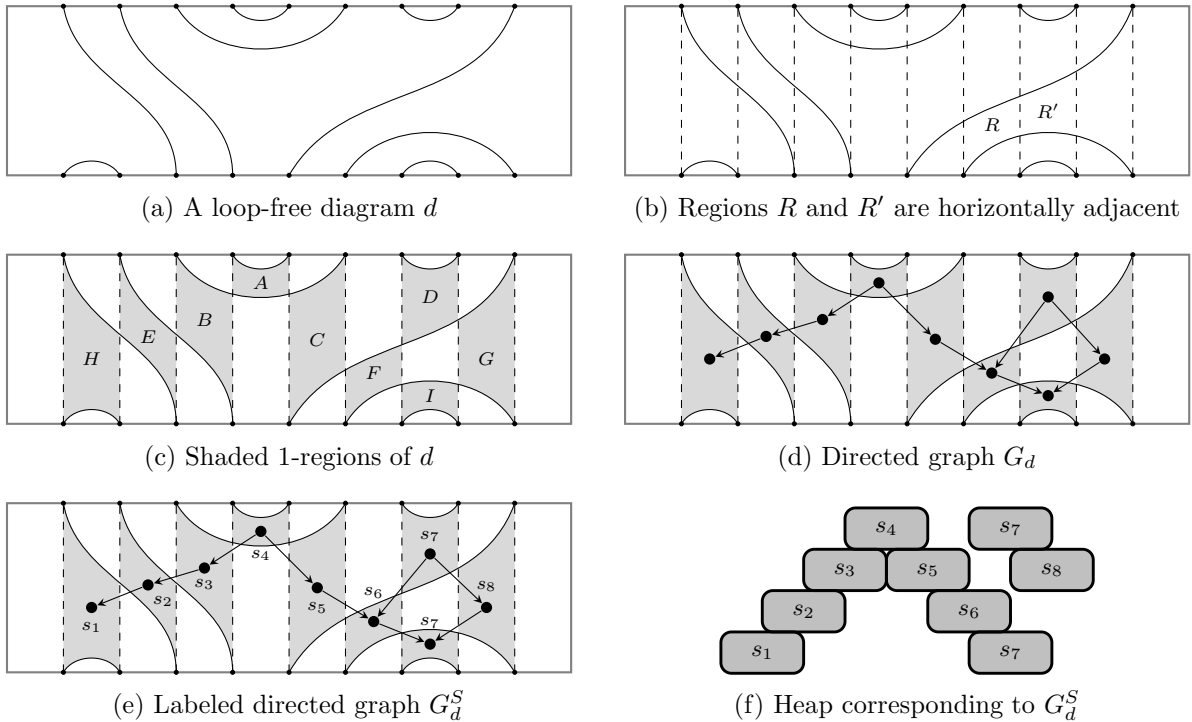

\begin{lemma}\label{lem:EvenEdges}
The number of edges within a single column of a diagram is even.
\end{lemma}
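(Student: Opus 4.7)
The plan is to reduce the lemma to a simple parity count by interpreting ``edges within column $C_i$'' as edges that cross the vertical midline $L$ at $x = i + \tfrac{1}{2}$.

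First, I would verify that an edge $e$ of $d$ is within column $C_i$ if and only if it crosses $L$, and that in this case $e$ meets $L$ in exactly one transverse point. The ``if'' direction is immediate from continuity. For the ``only if'' direction, every endpoint of $e$ lies at an integer $x$-coordinate on the north or south face, so any portion of $e$ with $i < x < i+1$ forces $e$ to meet both vertical lines $x = i$ and $x = i+1$; the no-double-back hypothesis then pins down a single transverse crossing with $L$. In particular, no edge terminates on $L$, since the endpoints of $L$ lie on the top and bottom sides of the box at $x = i + \tfrac{1}{2}$, which is not a node position.

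Second, I would run a parity argument on the endpoints lying to each side of $L$. The nodes strictly to the left of $L$ are exactly $\{1, 2, \ldots, i\} \cup \{1', 2', \ldots, i'\}$, a set of size $2i$. Since $d$ is loop-free, every node is the endpoint of exactly one edge, so the edges of $d$ partition into three classes according to whether both, neither, or exactly one endpoint lies left of $L$. Let $m$ denote the number of edges with both endpoints to the left of $L$, and let $E$ denote the number with exactly one endpoint on each side. Counting node endpoints to the left yields $2m + E = 2i$, so $E = 2(i - m)$ is even. By the first step, $E$ equals the number of edges within $C_i$, completing the proof.

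The main obstacle is the equivalence in the first step, where the no-double-back assumption does essential work: without it, an edge could briefly enter $C_i$ from one side and retreat without ever crossing $L$, breaking the identification between ``within $C_i$'' and ``crosses $L$.'' Once this equivalence is in hand, the parity count is an immediate one-line consequence of the fact that the two sides of $L$ partition the $2i$ left-of-$L$ node endpoints.
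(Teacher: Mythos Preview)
Your argument is correct, and it takes a genuinely different route from the paper. The paper proves the lemma by invoking the simple representation of $d_w$: in that concrete picture, the only arcs in the interior of column $C_i$ are the cap/cup pairs contributed by the factors $d_i$, so the count is manifestly even, and the no-double-back hypothesis ensures this count is unchanged under isotopy. Your proof bypasses the simple representation entirely and instead runs a direct parity count on the nodes to the left of the midline $L$, identifying ``edges within $C_i$'' with ``edges crossing $L$'' via the no-double-back hypothesis.

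What each approach buys: your argument is more self-contained, since it does not appeal to the existence of a factorization into simple diagrams (and hence does not lean, even implicitly, on Proposition~\ref{prop:TL=DTL}); it works for any loop-free pseudo $(n+1)$-diagram satisfying the no-double-back condition, regardless of whether one already knows it is some $d_w$. The paper's argument, on the other hand, is better aligned with the rest of Section~\ref{sec:MainResults}: it makes the connection between columns and simple-diagram factors explicit from the outset, which is exactly the content of Corollary~\ref{cor:MainResult} and the heap correspondence that follows. One small point of presentation: in your first step, the claim that an edge with a point in $(i,i+1)$ must meet both $x=i$ and $x=i+1$ is really just the statement that the $x$-coordinate along a non-vertical edge is strictly monotone (this is what no-double-back says), together with the fact that endpoints sit at integer $x$-values; stating it that way makes the deduction of a single transverse crossing with $L$ immediate.
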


\begin{proof}
This is clear for the simple representation of a diagram as each simple diagram $d_i$ contributes precisely two edges to the column $C_i$. Isotopically deforming edges (while avoiding edges doubling-back on themselves) does not change the parity of the number of edges within the column.
\end{proof}

\begin{lemma}\label{lem:OddRegions}
The number of regions in each column is odd.
\end{lemma}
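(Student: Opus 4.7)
The plan is to combine Lemma~\ref{lem:EvenEdges} with the elementary topological fact that $n$ pairwise disjoint simple arcs in a closed disk, each with both endpoints on the boundary, decompose the disk into exactly $n+1$ connected components. Since each column $C_i$ is homeomorphic to a closed disk and, by Lemma~\ref{lem:EvenEdges}, contains an even number $2k$ of edges, the topological fact immediately yields $2k+1$ regions in $C_i$, which is odd.

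The topological fact itself is proved by a short induction on $n$: with zero arcs the disk is a single region; and for the inductive step, any additional arc is disjoint from the others, so it lies in the closure of exactly one existing region, and, having both endpoints on the boundary of that region, splits it into precisely two pieces, so the region count increases by one.

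The only substantive verification required is that each piece of a diagram edge lying in $C_i$ really is a simple arc with both endpoints on the boundary of $C_i$, and that these arcs are pairwise disjoint. This is where the blanket assumptions at the start of Section~\ref{sec:MainResults} come in: loop-freeness forbids closed pieces in the interior of a column, and the no-double-back condition forces any edge-piece entering $C_i$ to exit across the column's boundary rather than terminate in its interior. Pairwise disjointness of the arcs is inherited from pairwise disjointness of the underlying edges of the diagram. This topological bookkeeping is the main (and rather minor) obstacle in the argument; no new ideas beyond Lemma~\ref{lem:EvenEdges} and elementary planar topology are needed.
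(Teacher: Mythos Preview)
Your proof is correct and follows essentially the same approach as the paper: invoke Lemma~\ref{lem:EvenEdges} to get an even number of edges in each column, then conclude that the number of regions is odd. The paper's proof is a single sentence that takes the ``$n$ arcs yield $n+1$ regions'' fact as evident, whereas you spell out the topological justification carefully; the underlying argument is the same.
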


\begin{proof}
Since the number of edges within any column is even, there must be an odd number of regions within each column.
\end{proof}

Note that isotopically deforming the edges of a concrete diagram preserves the relative adjacency of the regions in each column. We say that regions $R$ and $R'$ of column $C$ are \emph{vertically adjacent} if they are adjacent across a common edge. Within a single column, we will label the first region just below the north face with a $0$. Moving south, the next region will be labeled with a $1$ and we continue this way, alternating labels $0$ and $1$. We will refer to the labeled regions as $0$-regions and $1$-regions, respectively. By Lemma~\ref{lem:OddRegions}, it is clear that the southernmost region in each column is a $0$-region. Figure~\ref{fig:DiagramToGraphToHeapC} depicts a diagram and its $0$-regions and $1$-regions where we have shaded the $1$-regions.

Observe that if $d$ is a diagram from $\DTL(A_n)$, then each $1$-region in column $C_i$ of the simple representation for $d$ corresponds precisely to the regions bounded above and below by the pair of edges corresponding to a unique factor $d_i$.

Suppose $R$ and $R'$ are regions of adjacent columns $C$ and $C'$, respectively, of some diagram $d$.  We say that $R$ and $R'$ are \emph{horizontally adjacent} if there exist points $p$ and $p'$ in $R$ and $R'$, respectively, such that the line segment joining $p$ and $p'$ does not cross any edge of $d$.  Loosely speaking, $R$ and $R'$ are horizontally adjacent if they are adjacent across the common vertical boundary of $C$ and $C'$.  

Since we forbid edges from doubling-back on themselves, horizontal adjacency of regions is preserved when isotopically deforming the edges of $d$.  This implies that horizontal adjacency is well-defined.

Figure~\ref{fig:DiagramToGraphToHeapB} depicts two horizontally adjacent regions, $R$ and $R'$.  However, the regions labeled $R$ and $R'$ in the simple representation depicted in Figure~\ref{fig:SimpleRepnB} may appear at first glance to be horizontally adjacent, but they are not. This is evident by looking at the corresponding regions $R$ and $R'$ in Figure~\ref{fig:SimpleRepnC}.

If $R$ is a region of column $C$ in diagram $d$, then the \emph{depth} of $R$, $\depth(R)$, is defined to be the number of regions in $C$ strictly between the north face of $d$ and $R$. For example, we have $\depth(R)=1$ and $\depth(R')=2$ for the regions $R$ and $R'$ in Figure~\ref{fig:DiagramToGraphToHeapB}. Note that for any diagram, the northernmost region in each column has depth 0.  Moreover, every $1$-region has an odd depth while every $0$-region has an even depth.

\begin{lemma}\label{lem:HorizontallyAdjacentRegions}
If $R$ and $R'$ are horizontally adjacent regions of a diagram $d$, then 
\begin{enumerate}
\item $\abs{\depth(R)-\depth(R')}=1$, and
\item $R$ is a $1$-region if and only if $R'$ is a $0$-region.
\end{enumerate}
\end{lemma}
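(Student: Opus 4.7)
The plan is to deduce part (2) from part (1): every $1$-region has odd depth and every $0$-region has even depth, so depths differing by exactly $1$ must lie in opposite parity classes, forcing exactly one of $R, R'$ to be a $1$-region. The substance is therefore to prove part (1).

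Let $L = \{x = i+1\}$ denote the common vertical boundary of columns $C_i$ and $C_{i+1}$, and let $q = (i+1, y_0)$ be the point where a horizontal segment witnessing the adjacency crosses $L$; by hypothesis $q$ avoids every edge of $d$, so $y_0 \in (0,2)$ is an interior value. The heart of the proof is the pair of formulas
\[
\depth(R) = N + a_i, \qquad \depth(R') = N + a_{i+1},
\]
where $N$ counts the edges of $d$ that meet $L$ in its interior at a point with $y$-coordinate strictly greater than $y_0$, and $a_i, a_{i+1} \in \{0,1\}$ are indicators recording whether the unique edge of $d$ incident to the top node at $(i+1,2)$ lies in $\bar C_i$ or in $\bar C_{i+1}$, respectively. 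To establish the formulas, I would first use the no-doubling-back hypothesis to verify that regions within a column are linearly ordered from top to bottom by depth, so that $\depth(R)$ equals the number of arcs of $C_i$ lying strictly above $R$ in this order. Partitioning these arcs into those that meet $L$ in its interior and those that do not, the former are in bijection with $L$-crossings above $q$ and contribute $N$. For the latter, no-doubling-back forbids an arc of $C_i$ from having both endpoints on the line $x = i$, so every non-$L$-touching arc must have an endpoint at the corner $(i+1,2)$ or $(i+1,0)$. The arc at the top corner, when present in $\bar C_i$, necessarily lies above $R$ in the linear order, because the no-edge-crossing condition on the segment $pp'$ forces the endpoint $p$ to the ``below'' side of that arc; the arc at the bottom corner lies entirely below $R$. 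Hence the non-$L$-touching contribution is exactly $a_i$.

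The final step is to verify $a_i + a_{i+1} = 1$. The edge incident to the node at $(i+1,2)$ meets $L$ at that endpoint, and the no-doubling-back hypothesis forbids any other intersection with $L$, so the edge stays strictly on one side of $L$ and lies in exactly one of $\bar C_i$ or $\bar C_{i+1}$. The only alternative, in which the edge runs vertically along $L$ as a propagating edge, would block all horizontal adjacency across $L$ and is therefore excluded by hypothesis. Subtracting the two depth formulas then yields $|\depth(R) - \depth(R')| = |a_i - a_{i+1}| = 1$, completing the proof of (1). I anticipate the main obstacle will be the careful geometric argument justifying the linear ordering of regions within a column and the assertion that the top-corner arc always lies above $R$ in that order; both facts are intuitive from pictures but depend delicately on no-doubling-back to rule out pathological arc shapes, and the analysis becomes especially subtle when the top-corner arc approaches $L$ only tangentially at the corner $(i+1,2)$.
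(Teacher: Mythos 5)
Your argument is correct, and it is considerably more explicit than the paper's own proof, which disposes of part (1) with the single phrase ``induction on depth.'' Where the paper presumably intends to walk down the shared boundary line and track how the depth on each side increments past successive edge-endpoints, you instead compute both depths in closed form at once: $\depth(R)=N+a_i$ and $\depth(R')=N+a_{i+1}$, where $N$ counts edge-crossings of the common boundary $L$ strictly above the witness point $q$ and $a_i+a_{i+1}=1$ records which column absorbs the edge at the top node of $L$. The two key facts you flag as delicate are exactly the right ones, and both do follow from the no-doubling-back hypothesis as you indicate: each edge meets the closed column in at most one arc spanning it from left boundary to right boundary, so the arcs (hence the regions) are linearly ordered top to bottom and $\depth(R)$ is the number of arcs above $R$; and an arc lies above $R$ precisely when its endpoint on $L$ lies strictly above $q$, which settles both the bijection with the $N$ crossings and the placement of the top- and bottom-corner arcs. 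Your observation that a vertical propagating edge running along $L$ would preclude horizontal adjacency altogether correctly closes the one degenerate case. The deduction of (2) from (1) via the parity of depths matches the paper exactly. The net effect is a complete, non-inductive proof of a statement the paper leaves essentially unproved; the only cosmetic imprecisions are that the witnessing segment need not be horizontal (only its intersection point with $L$ matters) and that the edge at the top node may eventually leave both columns (only its germ at the node, equivalently which side of $L$ it occupies, is relevant) --- neither affects the argument.
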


\begin{proof}
Induction on depth quickly yields (1) while (2) is an immediate consequence of (1).
\end{proof}

We say that regions $R$ and $R'$ of a diagram $d$ are \emph{diagonally adjacent} if there exists a region $S$ that is vertically adjacent to $R$ and horizontally adjacent to $R'$.  In particular, if $S$ lies below $R$, then we write $R \to R'$.

\begin{lemma}\label{lem:DiagLabel}
If $R\to R'$ in a diagram $d$, then $R$ is a $1$-region if and only if $R'$ is a $1$-region.
\end{lemma}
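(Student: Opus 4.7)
The plan is to combine the two local parity facts we already have into a single parity chain: vertical adjacency flips the label within a column, and by Lemma~\ref{lem:HorizontallyAdjacentRegions}(2), horizontal adjacency also flips the label across columns. Unpacking the definition of $R\to R'$, there is a mediating region $S$ with $S$ vertically adjacent to $R$ (and lying in the same column as $R$) and $S$ horizontally adjacent to $R'$ (in an adjacent column). So two label-flips occur on the path $R \leftrightarrow S \leftrightarrow R'$, and two flips return to the original parity.

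More carefully, I first would verify vertical alternation: by the labeling convention, regions in a fixed column alternate $0,1,0,1,\ldots$ moving south from the north face, so any two vertically adjacent regions of a column carry different labels. Hence if $R$ is a $1$-region, then the region $S$ directly below $R$ is a $0$-region, and conversely if $R$ is a $0$-region, then $S$ is a $1$-region.

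Next I would invoke Lemma~\ref{lem:HorizontallyAdjacentRegions}(2) on the horizontally adjacent pair $S$ and $R'$: $S$ is a $1$-region if and only if $R'$ is a $0$-region. Combining the two steps, $R$ is a $1$-region exactly when $S$ is a $0$-region, which happens exactly when $R'$ is a $1$-region. This gives the desired biconditional.

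I do not anticipate a genuine obstacle here; the statement is essentially a two-step parity chase. The only subtlety is making sure the definition of $R\to R'$ is parsed correctly — namely that $S$ sits in the \emph{same} column as $R$ (so the $R$–$S$ step is a within-column flip) while $R'$ is in the \emph{neighboring} column (so the $S$–$R'$ step is a cross-column flip governed by Lemma~\ref{lem:HorizontallyAdjacentRegions}). Once this is pinned down, the proof is a one-line composition of the two parity rules.
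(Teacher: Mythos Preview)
Your proposal is correct and follows exactly the approach the paper takes: the paper's proof simply says the result follows from the construction of $0$- and $1$-regions (your vertical-alternation step) together with Lemma~\ref{lem:HorizontallyAdjacentRegions} (your horizontal-flip step). You have merely made explicit the two-step parity chase that the paper compresses into one sentence.
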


\begin{proof}
The result follows immediately from the construction of $0$-regions and $1$-regions together with Lemma~\ref{lem:HorizontallyAdjacentRegions}.
\end{proof}

\begin{example}
In Figure~\ref{fig:DiagramToGraphToHeapC}, we see that $A \to B \to E \to H$, $A \to C \to F \to I$, $D \to F \to I$ and $D \to G \to I$.
\end{example}

\begin{remark}\label{rem:Checkerboard}
If $C_i$ is not the leftmost or rightmost column, edges bounding $1$-regions in column $C_i$ must pass into its adjacent columns unless an edge connects directly to a node at the top or bottom of $C_i$. In the leftmost column, $C_1$, no edge will pass through to the left. Similarly, in the rightmost column, $C_n$, no edge will pass through to the right. This implies that if $R$ and $R'$ are both $1$-regions in the same column $C_i$ with $\depth(R')=\depth(R)+2$ (i.e., $R$ and $R'$ are consecutive $1$-regions in $C_i$ with $R'$ below $R$), then there exist $1$-regions $T$ and $T'$ in $C_{i-1}$ and $C_{i+1}$, respectively, such that $R \to T \to R'$ and $R \to T' \to R'$.  Loosely speaking, this determines a local checkerboard pattern of $1$-regions as seen in Figure~\ref{fig:DiagramToGraphToHeapC}.
\end{remark}

The checkerboard pattern of $0$-regions and $1$-regions motivates the following definition. Let $d$ be a diagram having $1$-regions $R_1, \ldots, R_n$. Define $G_d$ to be the directed graph having
\begin{enumerate}
\item vertex set $V(G_d):=\{R_1, \ldots R_n\}$ and
\item directed edges $(R_k, R_l)$ whenever $R_k \to R_l$.
\end{enumerate}

Since we require the edges of $d$ to not double-back on themselves, it is clear that $G_d$ is independent of choice of concrete representation for $d$; indeed, isotopically deforming the edges and rescaling the rectangle preserves horizontal and vertical adjacency and so diagonal adjacency is also preserved. In particular, if $w$ indexes $d$, then we can construct $G_d$ using the simple representation of $d_w$.

Figure~\ref{fig:DiagramToGraphToHeapD} shows the directed graph $G_d$ for the diagram $d$ given in Figure~\ref{fig:DiagramToGraphToHeapA}. Observe that directed paths correspond to chains of diagonally adjacent regions.

Next, we will append labels from the generating set of the Coxeter group to the vertices of $G_d$. Define the vertex labeling function $\nu : V(G_d) \to S$ as follows. If $R$ is a $1$-region that lies in column $i$, then $\nu(R) = s_i$. That is, each region is labeled with the generator of the corresponding column. Now, define $G_d^S$ to be the directed graph $G_d$ together with the labels on the vertices assigned by $\nu$. Figure~\ref{fig:DiagramToGraphToHeapE} shows the labeled directed graph $G_d^S$ for the diagram $d$ given in Figure~\ref{fig:DiagramToGraphToHeapA}.

Each labeled directed graph $G_d^S$ naturally corresponds to a unique labeled Hasse diagram of a heap for some element in $W(A_n)$. It follows from Remark~\ref{rem:Checkerboard} and Proposition~\ref{prop:Stembridge} that this element is $\FC$. Figure~\ref{fig:DiagramToGraphToHeapF} shows the heap that corresponds to the diagram $d$ given in Figure~\ref{fig:DiagramToGraphToHeapA}. It remains to show that the heap determined by $G_d^S$ corresponds to the group element that indexes the diagram $d$.

Since diagonal adjacency is preserved when isotopically deforming the edges of $d$, as in the simple representation, a $1$-region $R$ in column $C_i$ is bounded above and below by a pair of edges corresponding to the simple diagram $d_i$. This region is labeled $s_i$ in $G_d^S$. The structure of $G_d^S$ determines $w\in\FC(A_n)$ satisfying $d = d_w$, and it follows that $G_d^S$ corresponds to $H(w)$. Note that we find $w$ by writing the elements of $S$ corresponding to the labels of each row of the heap left to right starting from the top row and working toward the bottom of the heap.

The above discussion together with the preceding lemmas justifies the following theorem.

\begin{theorem}\label{thm:MainResult}
If $d$ is a loop-free diagram in $\TL(A_n)$, then $d$ is indexed by the heap determined by $G_d^S$. \qed
\end{theorem}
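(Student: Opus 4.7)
The plan is to prove the theorem by appealing to the simple representation of $d$ and showing that the construction of $G_d^S$ recovers precisely the heap associated to the underlying fully commutative element. By Proposition~\ref{prop:TL=DTL}, there is a unique $w \in \FC(A_n)$ with $d = d_w$, so the content of the theorem is that the labeled directed graph $G_d^S$ determines $H(w)$. I would fix a reduced expression $\w = s_{x_1}\cdots s_{x_k}$ for $w$ and form the simple representation of $d_w$ from the concatenated simple diagrams $d_{x_1},\ldots,d_{x_k}$. Since $G_d^S$ is built from horizontal, vertical, and hence diagonal adjacencies of regions, and these are preserved under isotopic deformation (given our standing hypothesis that edges do not double back), it suffices to compute $G_d^S$ from this particular concrete representative.

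The first key step is to set up a bijection between the vertex set of $G_d^S$ and the letters of $\w$. Each factor $d_{x_j}$ contributes exactly one pair of non-propagating edges in column $C_{x_j}$, which bound a single $1$-region $R_j$ of the simple representation, and conversely every $1$-region arises in this way (using Lemma~\ref{lem:EvenEdges} and Lemma~\ref{lem:OddRegions} to rule out other sources of $1$-regions). The vertex labeling function $\nu$ sends $R_j$ to $s_{x_j}$, so the vertices of $G_d^S$ are in label-preserving bijection with the indexing set of $\w$.

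The second, and genuinely substantive, step is to identify the directed edges of $G_d^S$ with the covering relations of the heap poset of $\w$. Given two $1$-regions $R_j$ and $R_l$, I would use Lemma~\ref{lem:HorizontallyAdjacentRegions} together with Remark~\ref{rem:Checkerboard} to show that $R_j \to R_l$ in the simple representation if and only if the generators $s_{x_j}$ and $s_{x_l}$ fail to commute (i.e. $|x_j-x_l|=1$), the block for $R_j$ sits strictly above the block for $R_l$, and no intermediate factor in either column $C_{x_j}$ or $C_{x_l}$ separates them. This is exactly the condition defining a covering relation in the heap of $\w$ as introduced in Section~\ref{subsec:Heaps}. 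Lemma~\ref{lem:DiagLabel} keeps the labeling consistent, ensuring that diagonal chains connect only $1$-regions.

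I expect the main obstacle to be the bookkeeping in this second step: verifying carefully that the local checkerboard pattern forced by Remark~\ref{rem:Checkerboard} produces a diagonal edge for each genuine covering relation, and blocks spurious diagonal edges across intervening occurrences of the same generator. Once this correspondence is established, reading the labels of $G_d^S$ top to bottom and left to right recovers a reduced expression for $w$ commutation-equivalent to $\w$, and the labeled Hasse diagram determined by $G_d^S$ coincides with $H(w)$, proving the theorem.
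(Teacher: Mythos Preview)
Your proposal is correct and follows essentially the same approach as the paper: compute $G_d^S$ from the simple representation of $d_w$ (using invariance of adjacency under isotopy), identify the $1$-regions with the simple-diagram factors, and match the diagonal-adjacency edges with the covering relations of the heap via the checkerboard pattern of Remark~\ref{rem:Checkerboard}. Your write-up is in fact more explicit than the paper's brief discussion preceding the theorem, particularly in isolating the second step about covering relations, but the strategy and the lemmas invoked are the same.
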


An immediate consequence of the above theorem is that we non-recursively obtain a factorization of a diagram $d$ by reading off from top to bottom and left to right the entries of the heap determined by $G_d^S$.  If we choose the canonical representation of the heap, then the factorization of $d$ corresponds to the Cartier--Foata normal form of~\cite{Cartier1969,Green2006a}. Our construction also yields the following corollary, which appeared independently as Lemma~3.3 in~\cite{Green1998a}.

\begin{corollary}\label{cor:MainResult}
If $d$ is a loop-free diagram in $\TL(A_n)$, then the number of occurrences of the simple diagram $d_i$ in any factorization for $d$ is equal to half the number of edges passing through the column $C_i$.
\end{corollary}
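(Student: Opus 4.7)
The plan is to combine Theorem~\ref{thm:MainResult} with a counting argument based on Lemmas~\ref{lem:EvenEdges} and~\ref{lem:OddRegions}. First I would argue that the statement is well-posed: since $d$ is loop-free, $d = d_w$ for a unique $w \in \FC(A_n)$, and by the definition of $d_w$ together with Proposition~\ref{prop:TL=DTL}, every factorization of $d$ as a product of simple diagrams arises from a reduced expression for $w$. Because $w$ is FC, Proposition~\ref{prop:Stembridge} guarantees that any two reduced expressions for $w$ are related by commutation moves alone, and commutations do not change the multiplicity of any generator. Hence the number of times $d_i$ appears is the same across all reduced factorizations of $d$, and equals the number of times $s_i$ appears in any reduced expression for $w$.

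Next I would translate that multiplicity into a statement about the heap. By Theorem~\ref{thm:MainResult}, $w$ is indexed by the heap determined by $G_d^S$, so the number of occurrences of $s_i$ in a reduced expression for $w$ equals the number of vertices of $G_d^S$ labeled $s_i$. By the definition of the labeling function $\nu$, these are precisely the $1$-regions of $d$ lying in column $C_i$. So it remains to show that the number of $1$-regions in $C_i$ is half the number of edges passing through $C_i$.

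For this final step, I would apply Lemma~\ref{lem:EvenEdges} to write the number of edges in $C_i$ as $2k$ for some nonnegative integer $k$. The $2k$ edges partition $C_i$ into $2k+1$ regions, in agreement with Lemma~\ref{lem:OddRegions}. By the labeling convention, the topmost region of $C_i$ is a $0$-region and the labels alternate moving downward, so the regions from top to bottom are labeled $0, 1, 0, 1, \ldots, 0$. Counting the $1$-regions in this alternating sequence of length $2k+1$ gives exactly $k$, which is half the number of edges, completing the proof.

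I do not expect a serious obstacle here, as the corollary is essentially a bookkeeping consequence of the main theorem together with the parity lemmas already established. The only point requiring care is the justification that the multiplicity of $d_i$ is independent of the choice of reduced factorization, and this is handled cleanly by appealing to the full commutativity of $w$.
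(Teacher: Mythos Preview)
Your proposal is correct and follows essentially the same approach the paper intends: the corollary is stated without proof as an immediate consequence of the construction, and you have simply made explicit the bookkeeping---Theorem~\ref{thm:MainResult} identifies factors $d_i$ with $1$-regions in $C_i$, and Lemmas~\ref{lem:EvenEdges} and~\ref{lem:OddRegions} give the count. One small caution: your sentence ``every factorization of $d$ as a product of simple diagrams arises from a reduced expression for $w$'' is not literally true (e.g., $d_i d_{i\pm1} d_i = d_i$), so you should phrase the well-posedness step as restricting to \emph{reduced} factorizations, which is the intended reading in the paper.
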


\begin{example}\label{ex:MainResult}
Consider the diagram $d$ given in Figure~\ref{fig:MainResultA}.  After forming columns, we obtain a checkerboard of $0$-regions and $1$-regions, which yields the labeled directed graph $G_d^S$ depicted in Figure~\ref{fig:MainResultB}. Then $G_d^S$ determines the canonical representation of the heap given in Figure~\ref{fig:MainResultC}, where each row of the heap has a unique color.  In Figure~\ref{fig:MainResultD}, we have color-coded the $1$-regions of $d$ to match the corresponding entries in the heap.  By reading off the entries of the heap, we see that $d = d_w$ where
\[
w = {\color{butterfly}s_{2}s_6} {\color{springgreen}s_{1}s_{3}} {\color{rose}s_{2}s_{4}} {\color{darkorchid}s_{3}s_{5}} {\color{nectarine}s_{4}}.
\]
Equivalently, we obtain the factorization
\[
d={\color{butterfly}d_{2}d_6} {\color{springgreen}d_{1}d_{3}} {\color{rose}d_{2}d_{4}} {\color{darkorchid}d_{3}d_{5}} {\color{nectarine}d_{4}},
\]
which is shown (rotated counterclockwise by a quarter turn in the interest of space) in Figure~\ref{fig:MainResultE}.
\end{example}

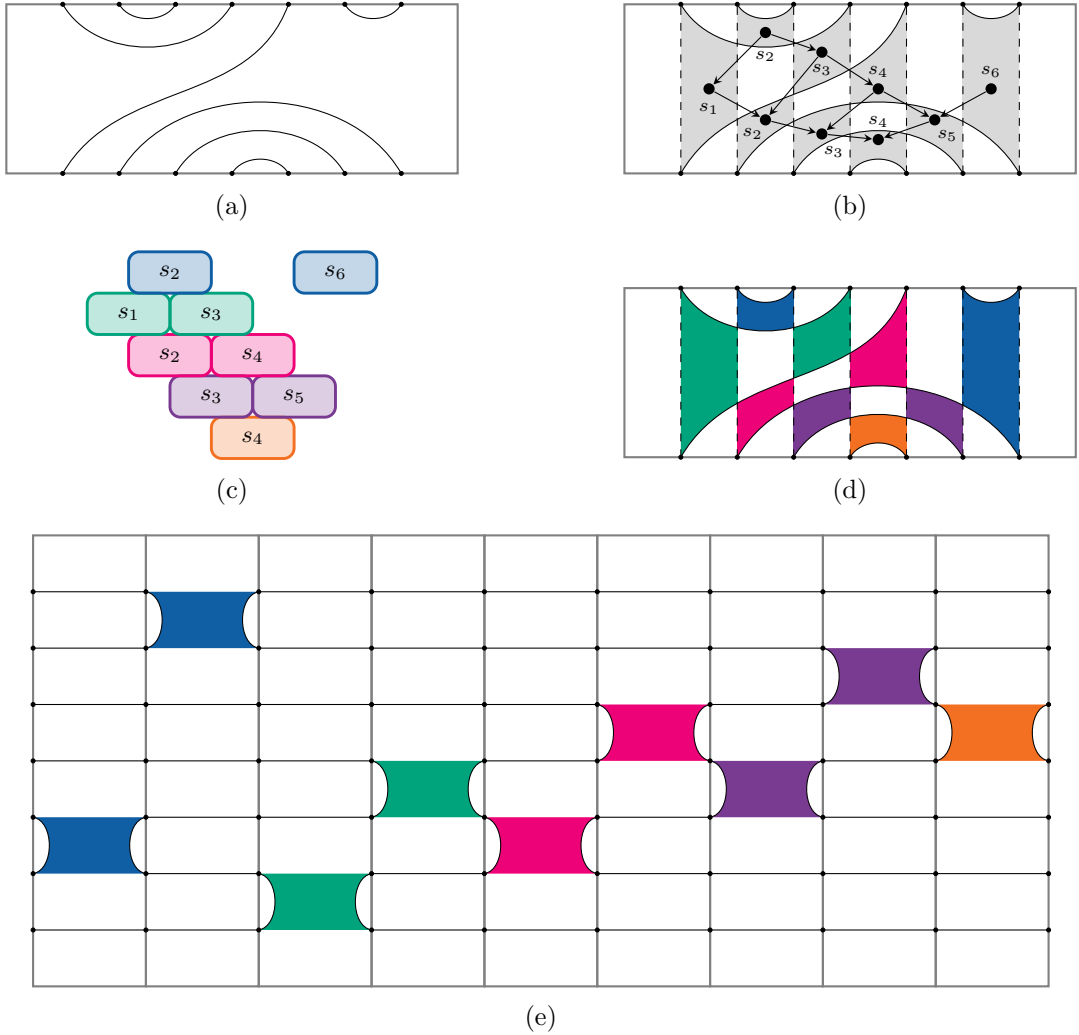
\begin{figure}
\centering
\begin{subfigure}[b]{0.49\linewidth}
\centering
\begin{tikzpicture}[scale=.75]
\draw[gray, thick] (0,0) rectangle (8,3);
\foreach \x in {1,2,3,4,5,6,7} \filldraw (\x,0) circle (1pt);
\foreach \x in {1,2,3,4,5,6,7} \filldraw (\x,3) circle (1pt);
\draw (2,0) to [bend left=60] (7,0);
\draw (1,3) to [bend right=60] (4, 3);
\draw (2,3) to [bend right=60] (3,3);
\draw (3,0) to [bend left=60] (6,0);
\draw (4,0) to [bend left=60] (5,0);
\draw (5,3) to [out=-110,in=60]  (1,0);
\draw (6,3) to [bend right=60] (7,3);
\end{tikzpicture}
\caption{}
\label{fig:MainResultA}
\end{subfigure}
\begin{subfigure}[b]{0.49\linewidth}\centering
\begin{tikzpicture}[scale=.75]
\begin{scope}
\clip{(1,0) rectangle (2,3)};
\fill[gray!30](1,3) to [bend right=60] (4, 3) to (5,3) to [out=-110,in=60]  (1,0);
\end{scope}
\begin{scope}
\clip{(2,0) rectangle (3,3)};
\fill[gray!30](2,3) to [bend right=60] (3,3) to (4,3) to [bend left=60] (1, 3);
\fill[gray!30](5,3) to [out=-110,in=60]  (1,0) to (2,0) to [bend left=60] (7,0);
\end{scope}
\begin{scope}
\clip{(3,0) rectangle (4,3)};
\fill[gray!30](5,3) to [out=-110,in=60]  (1,0) to (1,3) to [bend right=60] (4, 3);
\fill[gray!30](2,0) to [bend left=60] (7,0) to (6,0) to [bend right=60] (3,0);
\end{scope}
\begin{scope}
\clip{(4,0) rectangle (5,3)};
\fill[gray!30](5,3) to [out=-110,in=60]  (1,0) to (2,0) to [bend left=60] (7,0);
\fill[gray!30](6,0) to [bend right=60] (3,0) to (4,0) to [bend left=60] (5,0);
\end{scope}
\begin{scope}
\clip{(5,0) rectangle (6,3)};
\fill[gray!30](2,0) to [bend left=60] (7,0) to (6,0) to [bend right=60] (3,0);
\end{scope}
\begin{scope}
\clip{(6,0) rectangle (7,3)};
\fill[gray!30](2,0) to [bend left=60] (7,0) to (7,3) to [bend left=60] (6,3);
\end{scope}
\draw[gray, thick] (0,0) rectangle (8,3);
\foreach \x in {1,2,3,4,5,6,7} \filldraw (\x,0) circle (1pt);
\foreach \x in {1,2,3,4,5,6,7} \filldraw (\x,3) circle (1pt);
\draw (2,0) to [bend left=60] (7,0);
\draw (1,3) to [bend right=60] (4, 3);
\draw (2,3) to [bend right=60] (3,3);
\draw (3,0) to [bend left=60] (6,0);
\draw (4,0) to [bend left=60] (5,0);
\draw (5,3) to [out=-110,in=60]  (1,0);
\draw (6,3) to [bend right=60] (7,3);
\draw [dashed] (1,0) to (1,3);
\draw [dashed] (2,0) to (2,3);
\draw [dashed] (3,0) to (3,3);
\draw [dashed] (4,0) to (4,3);
\draw [dashed] (5,0) to (5,3);
\draw [dashed] (6,0) to (6,3);
\draw [dashed] (7,0) to (7,3);
\node[circle, fill=black, inner sep=1.5pt, label={[xshift=0cm, yshift=-.6cm]\tiny $s_2$}](2-1) at (2.5, 2.5) {};
\node[circle, fill=black, inner sep=1.5pt, label={[xshift=0cm, yshift=-.55cm]\tiny $s_1$}](1-1) at (1.5, 1.5) {};
\node[circle, fill=black, inner sep=1.5pt, label={[xshift=-.17cm, yshift=-.47cm]\tiny $s_2$}](2-2) at (2.5, 0.95) {};
\node[circle, fill=black, inner sep=1.5pt, label={[xshift=0cm, yshift=-.52cm]\tiny $s_3$}](3-1) at (3.5, 2.15) {};
\node[circle, fill=black, inner sep=1.5pt, label={[xshift=.16cm, yshift=-.52cm]\tiny $s_3$}](3-2) at (3.5, 0.70) {};
\node[circle, fill=black, inner sep=1.5pt, label={[xshift=0cm, yshift=-.06cm]\tiny $s_4$}](4-1) at (4.5, 1.5) {};
\node[circle, fill=black, inner sep=1.5pt, label={[xshift=0cm, yshift=-.05cm]\tiny $s_4$}](4-2) at (4.5, .6) {};
\node[circle, fill=black, inner sep=1.5pt, label={[xshift=.17cm, yshift=-.5cm]\tiny $s_5$}](5-1) at (5.5, .95) {};
\node[circle, fill=black, inner sep=1.5pt, label={[xshift=0cm, yshift=-.06cm]\tiny $s_6$}](6-1) at (6.5, 1.5) {};
\draw[-stealth] (2-1) to (1-1);
\draw[-stealth] (2-1) to (3-1);
\draw[-stealth] (1-1) to (2-2);
\draw[-stealth] (3-1) to (2-2);
\draw[-stealth] (3-1) to (4-1);
\draw[-stealth] (4-1) to (3-2);
\draw[-stealth] (2-2) to (3-2);
\draw[-stealth] (4-1) to (5-1);
\draw[-stealth] (3-2) to (4-2);
\draw[-stealth] (5-1) to (4-2);
\draw[-stealth] (6-1) to (5-1);
\end{tikzpicture}
\caption{}
\label{fig:MainResultB}
\end{subfigure}
\par\bigskip
\begin{subfigure}[b]{0.49\linewidth}
\centering
\begin{tikzpicture}[scale=.55]
\heapblock{4}{1}{s_4}{nectarine};
\heapblock{3}{2}{s_3}{darkorchid};
\heapblock{5}{2}{s_5}{darkorchid};
\heapblock{2}{3}{s_2}{rose};
\heapblock{4}{3}{s_4}{rose};
\heapblock{6}{5}{s_6}{butterfly};
\heapblock{1}{4}{s_1}{springgreen};
\heapblock{3}{4}{s_3}{springgreen};
\heapblock{2}{5}{s_2}{butterfly};
\end{tikzpicture}
\caption{}
\label{fig:MainResultC}
\end{subfigure}
\begin{subfigure}[b]{0.49\linewidth}
\centering
\begin{tikzpicture}[scale=.75]
\begin{scope}
\clip{(1,0) rectangle (2,3)};
\fill[springgreen](1,3) to [bend right=60] (4, 3) to (5,3) to [out=-110,in=60]  (1,0);
\end{scope}
\begin{scope}
\clip{(2,0) rectangle (3,3)};
\fill[butterfly](2,3) to [bend right=60] (3,3) to (4,3) to [bend left=60] (1, 3);
\fill[rose](5,3) to [out=-110,in=60]  (1,0) to (2,0) to [bend left=60] (7,0);
\end{scope}
\begin{scope}
\clip{(3,0) rectangle (4,3)};
\fill[springgreen](5,3) to [out=-110,in=60]  (1,0) to (1,3) to [bend right=60] (4, 3);
\fill[darkorchid](2,0) to [bend left=60] (7,0) to (6,0) to [bend right=60] (3,0);
\end{scope}
\begin{scope}
\clip{(4,0) rectangle (5,3)};
\fill[rose](5,3) to [out=-110,in=60]  (1,0) to (2,0) to [bend left=60] (7,0);
\fill[nectarine](6,0) to [bend right=60] (3,0) to (4,0) to [bend left=60] (5,0);
\end{scope}
\begin{scope}
\clip{(5,0) rectangle (6,3)};
\fill[darkorchid](2,0) to [bend left=60] (7,0) to (6,0) to [bend right=60] (3,0);
\end{scope}
\begin{scope}
\clip{(6,0) rectangle (7,3)};
\fill[butterfly](2,0) to [bend left=60] (7,0) to (7,3) to [bend left=60] (6,3);
\end{scope}
\draw[gray, thick] (0,0) rectangle (8,3);
\foreach \x in {1,2,3,4,5,6,7} \filldraw (\x,0) circle (1pt);
\foreach \x in {1,2,3,4,5,6,7} \filldraw (\x,3) circle (1pt);
\draw (2,0) to [bend left=60] (7,0);
\draw (1,3) to [bend right=60] (4, 3);
\draw (2,3) to [bend right=60] (3,3);
\draw (3,0) to [bend left=60] (6,0);
\draw (4,0) to [bend left=60] (5,0);
\draw (5,3) to [out=-110,in=60]  (1,0);
\draw (6,3) to [bend right=60] (7,3);
\draw [dashed] (1,0) to (1,3);
\draw [dashed] (2,0) to (2,3);
\draw [dashed] (3,0) to (3,3);
\draw [dashed] (4,0) to (4,3);
\draw [dashed] (5,0) to (5,3);
\draw [dashed] (6,0) to (6,3);
\draw [dashed] (7,0) to (7,3);
\end{tikzpicture}
\caption{}
\label{fig:MainResultD}
\end{subfigure}
\par\bigskip
\begin{subfigure}[b]{1\linewidth}
\centering
\rotatebox{90}{%
\begin{tikzpicture}[scale=.75]
\begin{scope}
\clip{(1,0) rectangle (2,18)};
\fill[springgreen] (1,12) to [bend left=80] (2,12) to (2,14) to [bend left=80] (1,14);
\end{scope}
\begin{scope}
\clip{(2,0) rectangle (3,18)};
\fill[butterfly](2,16) to [bend left=80] (3,16) to (3,18) to [bend left=80] (2,18);
\fill[rose] (2,8) to [bend left=80] (3,8) to (3,10) to [bend left=80] (2,10);
\end{scope}
\begin{scope}
\clip{(3,0) rectangle (4,18)};
\fill[springgreen] (3,10) to [bend left=80] (4,10) to (4,12) to [bend left=80] (3,12);
\fill[darkorchid] (3,4) to [bend left=80] (4,4) to (4,6) to [bend left=80] (3,6);
\end{scope}
\begin{scope}
\clip{(4,0) rectangle (5,18)};
\fill[rose] (4,6) to [bend left=80] (5,6) to (5,8) to [bend left=80] (4,8);
\fill[nectarine] (4,0) to [bend left=80] (5,0) to (5,2) to [bend left=80] (4,2);
\end{scope}
\begin{scope}
\clip{(5,0) rectangle (6,18)};
\fill[darkorchid] (5,2) to [bend left=80] (6,2) to (6,4) to [bend left=80] (5,4);
\end{scope}
\begin{scope}
\clip{(6,0) rectangle (7,18)};
\fill[butterfly](6,14) to [bend left=80] (7,14)to (7,16) to [bend left=80] (6,16);
\end{scope}
\draw[gray,thick] (0,0) rectangle (8,2);
\draw[gray,thick] (0,2) rectangle (8,4);
\draw[gray,thick] (0,4) rectangle (8,6);
\draw[gray,thick] (0,6) rectangle (8,8);
\draw[gray,thick] (0,8) rectangle (8,10);
\draw[gray,thick] (0,10) rectangle (8,12);
\draw[gray,thick] (0,12) rectangle (8,14);
\draw[gray,thick] (0,14) rectangle (8,16);
\draw[gray,thick] (0,16) rectangle (8,18);
\foreach \x in {1,2,3,4,5,6,7} \filldraw (\x,0) circle (1pt);
\foreach \x in {1,2,3,4,5,6,7} \filldraw (\x,2) circle (1pt);
\foreach \x in {1,2,3,4,5,6,7} \filldraw (\x,4) circle (1pt);
\foreach \x in {1,2,3,4,5,6,7} \filldraw (\x,6) circle (1pt);
\foreach \x in {1,2,3,4,5,6,7} \filldraw (\x,8) circle (1pt);
\foreach \x in {1,2,3,4,5,6,7} \filldraw (\x,10) circle (1pt);
\foreach \x in {1,2,3,4,5,6,7} \filldraw (\x,12) circle (1pt);
\foreach \x in {1,2,3,4,5,6,7} \filldraw (\x,14) circle (1pt);
\foreach \x in {1,2,3,4,5,6,7} \filldraw (\x,16) circle (1pt);
\foreach \x in {1,2,3,4,5,6,7} \filldraw (\x,18) circle (1pt);
\draw (4,0) to [bend left=80] (5,0);
\draw (4,2) to [bend right=80] (5,2);
\draw (3,4) to [bend left=80] (4,4);
\draw (3,6) to [bend right=80] (4,6);
\draw (5,2) to [bend left=80] (6,2);
\draw (5,4) to [bend right=80] (6,4);
\draw (2,8) to [bend left=80] (3,8);
\draw (2,10) to [bend right=80] (3,10);
\draw (4,6) to [bend left=80] (5,6);
\draw (4,8) to [bend right=80] (5,8);
\draw (1,12) to [bend left=80] (2,12);
\draw (1,14) to [bend right=80] (2,14);
\draw (3,10) to [bend left=80] (4,10);
\draw (3,12) to [bend right=80] (4,12);
\draw (2,16) to [bend left=80] (3,16);
\draw (2,18) to [bend right=80] (3,18);
\draw (6,14) to [bend left=80] (7,14);
\draw (6,16) to [bend right=80] (7,16);
\draw (1,0) to (1,12);
\draw (2,0) to (2,8);
\draw (3,0) to (3,4);
\draw (6,0) to (6,2);
\draw (7,0) to (7,14);
\draw (1,14) to (1,18);
\draw (2,10) to (2,12);
\draw (2,14) to (2,16);
\draw (3,12) to (3,16);
\draw (3,6) to (3,8);
\draw (4,2) to (4,4);
\draw (4,8) to (4,10);
\draw (4,12) to (4,18);
\draw (5,4) to (5,6);
\draw (5,8) to (5,18);
\draw (6,4) to (6,14);
\draw (6,16) to (6,18);
\draw (7,16) to (7,18);
\end{tikzpicture}}
\caption{}
\label{fig:MainResultE}
\end{subfigure}
\caption{Given a diagram $d$, we can obtain a reduced factorization by constructing the corresponding labeled directed graph $G_d^S$, which yields the canonical representation of the heap that indexes $d$. We have color-coded the corresponding $1$-regions and entries of the heap.}
\label{fig:MainResult}
\end{figure}


\section{Closing Remarks}\label{sec:ClosingRemarks}

If $(W,S)$ is a Coxeter system of type $\Gamma$, the associated Hecke algebra $\H(\Gamma)$ is an algebra with a basis given by $\{T_w\mid w \in W\}$ and relations that deform the relations of $W$ by a parameter $q$.  As mentioned in Section~\ref{sec:Intro}, the ordinary Temperley--Lieb algebra $\TL(A_n)$ is a quotient of the corresponding Hecke algebra $\H(A_n)$. This realization of the Temperley--Lieb algebra as a Hecke algebra quotient was generalized by Graham~\cite{Graham1995} to the case of an arbitrary Coxeter system. In general, the Temperley--Lieb algebra $\TL(\Gamma)$ is a quotient of $\H(\Gamma)$ having several bases indexed by the FC elements of $W$~\cite[Theorem 6.2]{Graham1995}.

When a faithful diagrammatic representation of $\TL(\Gamma)$ is known to exist, multiplication in the diagram algebra is given by applying local combinatorial rules to the diagrams. In each case, one can choose a basis for the diagram algebra so that each basis diagram is indexed by an FC element, where the diagrams indexed by the distinguished generators of the Coxeter group form a set of ``simple diagrams" that generate the algebra. Every factorization of a basis diagram in terms of simple diagrams corresponds precisely to a factorization of the FC element that indexes the diagram. 

Given a reduced expression for an FC element, obtaining the corresponding diagram is straightforward. All one needs to do is concatenate the sequence of simple diagrams determined by the reduced expression and then apply the appropriate local combinatorial rules in the diagram algebra. However, it is another matter to reverse this process.  That is, given a basis diagram, can one obtain a factorization in terms of simple diagrams, or equivalently obtain a reduced expression for the FC element that indexes the diagram?  Theorem~\ref{thm:MainResult} answers this question in the affirmative in the case of type $A_n$.  What happens with the other types where faithful diagrammatic representations are known to exist?  For example, can we find factorization algorithms for the Temperley--Lieb diagram algebras of types $B_n$, $D_n$, $E_n$, $\widetilde{A}_n$, and $\widetilde{C}_n$?


\bibliographystyle{plain}
\bibliography{DiagramFactorization}


\end{document}